\definecolor{darkgreen}{rgb}{0,0.4,0}
\definecolor{green}{rgb}{0,0.4,0}
\definecolor{BrickRed}{rgb}{0.65,0.08,0}
\newcommand{\LandauO}{\mathcal{O}}
\newcommand{\Ai}{\text{\normalfont Ai}}
\newcommand{\oeis}[1]{\href{http://oeis.org/#1}{#1}}
\title
{Asymptotics of relaxed \texorpdfstring{$k$}{k}-ary trees}
\author{Manosij Ghosh Dastidar}
{Institut f{\"u}r Diskrete Mathematik und Geometrie, Technische Universit\"at Wien, Wien, Austria}
{}
{https://orcid.org/0000-0003-0721-4979}
{supported by the Austrian Science Fund (FWF):~P~34142.}
\author{Michael Wallner}
{Institut f{\"u}r Diskrete Mathematik und Geometrie, Technische Universit\"at Wien, Wien, Austria
\and \url{https://dmg.tuwien.ac.at/mwallner/}}
{}
{https://orcid.org/0000-0001-8581-449X}
{supported by the Austrian Science Fund (FWF):~P~34142.}
\authorrunning{M.\ Ghosh Dastidar and M.\ Wallner} 
\keywords{Asymptotic enumeration, 
stretched exponential,
Airy function,
directed acyclic graph,
Dyck paths,
compacted trees,
minimal automata}
\begin{document}

\maketitle

\begin{abstract}
A relaxed $k$-ary tree is an ordered directed acyclic graph with a unique source and sink in which every node has out-degree $k$.
These objects arise in the compression of trees in which some repeated subtrees are factored and repeated appearances are replaced by pointers.
We prove an asymptotic theta-result for the number of relaxed $k$-ary tree with $n$ nodes for $n \to \infty$. 
This generalizes the previously proved binary case to arbitrary finite arity, and shows that the seldom observed phenomenon of a stretched exponential term $e^{c n^{1/3}}$ appears in all these cases.
We also derive the recurrences for compacted $k$-ary trees in which all subtrees are unique and minimal deterministic finite automata accepting a finite language over a finite alphabet.
\thispagestyle{empty}
\end{abstract}


\section{Introduction and Main Result}

The enumeration of directed acyclic graphs (DAGs) is an important and timely topic in computer science~\cite{bousquet2015xml}, mathematics~\cite{vdHofstad2017Graphs,Bollobas2001Graphs,JansonEtal2000Graphs}, and many related areas such as phylogenetics~\cite{McDiarmidSempleWelsh2015Networks} and theoretical physics~\cite{GuttmannWhittington1978Graphs,guttmann2015analysis}.
Several problems have remained open for a long time, with bounds sometimes differing by an exponential factor.
One of those problems is the enumeration of minimal deterministic finite automata (DFAs) with $n$ states recognizing a finite language over a finite alphabet~\cite{Liskovets2006exact}. 
In~\cite{ElveyPriceFangWallner2020DFA} Elvey Price, Fang, and Wallner solved the corresponding asymptotic counting problem for a binary alphabet, and uncovered the remarkable phenomenon of a stretched exponential term $e^{c \, n^{1/3}}$. 
This term provides an explanation for the previously encountered difficulties.
For example, the associated generating function cannot be algebraic, and it can only be D-finite (satisfy a linear differential equation with polynomial coefficients) if it has an irregular singularity. 

This phenomenon was first observed by the above-mentioned authors in~\cite{ElveyPriceEtal2021Compacted} in the asymptotic number of another class of DAGs: compacted binary trees of size $n$. 
These arise in the compression of XML documents~\cite{bousquet2015xml}, in the common subexpression problem in, e.g., compiling~\cite{DowneySethiTarjan1980variations}, and in data structures of, e.g., computer algebra software~\cite{flss90}. 
Since then, this phenomenon has additionally been shown in many classes of DAGs and related objects, such as
phylogenetic networks~\cite{Changetal2023dcombining}, 
permutation patterns~\cite{Wallner2023Inversion},
and
Young tableaux~\cite{BanderierWallner2021Walls}.
In this paper, we show that the examples of compacted binary trees and minimal DFAs are just single cases of infinite families admitting a stretched exponential. 
Our main result is the following asymptotics of a super-class of the latter two.
\begin{theorem}
Let $k\geq2$ be an integer. The number of relaxed $k$-ary trees satisfies for $n \to \infty$
\label{theo:relaxedasymptotics}
\begin{align*}
    \Theta \left( n!^{k-1} \left(\frac{k^k}{(k-1)^{k-1}}\right)^n e^{3 \left(\frac{k(k-1)}{2}\right)^{1/3} a_1 n^{1/3}} n^{\frac{2k-1}{3}} \right),
\end{align*}
where $a_1 \approx -2.338$ is the largest root of the Airy function $\Ai(x)$ defined as the unique function satisfying $\Ai''(x)=x \Ai(x)$ and $\lim_{x \to \infty} \Ai(x)=0$.
\end{theorem}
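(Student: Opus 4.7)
\emph{Combinatorial recurrence.} The plan is first to set up a recurrence for $R_n$, the number of relaxed $k$-ary trees with $n$ nodes. Since the DAG has a unique source and sink and ordered out-edges, the nodes can be canonically labelled $1,\dots,n$, and the tree is encoded by an ordered $k$-tuple of targets at each node (subject to acyclicity and reachability). A last-node-removal decomposition should yield a recurrence expressing $R_n$ in terms of $R_{n-1}, R_{n-2}, \dots$ with polynomial coefficients in $n$; the ``free pointers'' at a newly added node contribute roughly $k-1$ independent choices of order $n$, which accounts for the expected factorial growth $n!^{k-1}$ in the final answer.

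\emph{Normalization.} Next I would normalize by setting $u_n := R_n/(n!^{k-1} C^n)$ with $C := k^k/(k-1)^{k-1}$. The value of $C$ is dictated by the saddle-point of the dominant convolution in the recurrence for $R_n$ and essentially encodes the asymptotics of the relevant multinomial coefficients of the pointer distribution. The resulting normalized sequence $u_n$ should satisfy a recurrence whose continuum limit, after a WKB-style rescaling $n = x \cdot N$ with $N \to \infty$, is a second-order linear equation of Airy type $f''(x) = x f(x)$.

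\emph{Airy analysis.} With the ansatz $u_n \sim A\, n^{(2k-1)/3} \exp(\kappa n^{1/3})$, substitution into the normalized recurrence and matching of the dominant terms should produce
$\kappa = 3\bigl(\tfrac{k(k-1)}{2}\bigr)^{1/3} a_1$,
with $a_1$ the largest zero of $\Ai(x)$: the zero of the Airy function appears because the recurrence, once linearized and rescaled, selects solutions of Airy type that must remain nonnegative and decay at infinity, and such solutions are proportional to $\Ai$ whose first (largest) zero is $a_1$. The polynomial correction $n^{(2k-1)/3}$ is then forced by the subleading WKB term, with the exponent depending on $k$ through the multiplicity of convolution factors contributed by the $k$ out-edges.

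\emph{Main obstacle.} The hardest step is upgrading these heuristic asymptotics to the matching upper and lower bounds required for a $\Theta$-result. Following the blueprint of the binary case treated by Elvey Price, Fang, and Wallner, one would construct explicit sub- and super-solutions of the normalized recurrence carrying the correct Airy-type decay and compare them to $u_n$ by induction on $n$. In the $k$-ary setting the recurrence has more convolution terms and the scaling exponents genuinely depend on $k$, so propagating these bounds without degrading either the stretched-exponential rate $\kappa$ or the polynomial exponent $(2k-1)/3$ is where the real work lies; a uniform estimate on the $k$-fold convolution terms, rather than a mechanical translation of the binary estimate, is likely the technical heart of the proof.
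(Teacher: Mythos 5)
Your high-level roadmap (normalize away $n!^{k-1}$ and the exponential, find Airy behaviour, then build sub- and super-solutions as in the binary case) matches the spirit of the paper, but your foundational step contains a genuine gap. You posit that a last-node-removal decomposition yields a recurrence for $R_n$ in terms of $R_{n-1},R_{n-2},\dots$ with polynomial coefficients. No such univariate P-recurrence is known for these objects, and its existence would be equivalent to the generating function being D-finite --- which is exactly what the presence of the stretched exponential $e^{cn^{1/3}}$ makes doubtful (the paper notes the generating function cannot be algebraic and could only be D-finite with an irregular singularity). Removing the last node of a relaxed $k$-ary tree does not leave a relaxed $k$-ary tree together with a small amount of independent data: the pointers of the remaining nodes are constrained by the global post-order structure, so the decomposition does not close up into a finite-order recurrence in $n$ alone. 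Everything downstream of this step (the ``continuum limit'' of a univariate recurrence, the WKB rescaling $n=xN$) therefore has no object to act on.

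The paper instead works with a \emph{bivariate} recurrence. It first gives a bijection between relaxed $k$-ary trees and horizontally $k$-decorated lattice paths below the diagonal $y=x/(k-1)$, yielding $r_{n,m}=r_{n,m-1}+(m+1)r_{n-1,m}$, where the two indices track position along the path; the answer is the single entry $r_{(k-1)n,n}$. After dividing out $((k-1)n)!$ and a change of variables this becomes a Dyck-like recurrence $d_{i,j}=U(i,j)d_{i-1,j-1}+d_{i-1,j+k-1}$ with a space-dependent weight, and the Airy function arises from a boundary-layer analysis in the \emph{second} variable, $j\sim x\,i^{1/3}$, with the condition $f(0)=0$ at the wall forcing the argument of $\Ai$ to sit at its largest zero $a_1$. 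Your intuition about where $a_1$, the rate $\kappa$, and the exponent $(2k-1)/3$ come from is directionally right, and your identification of the sub-/super-solution induction as the technical heart agrees with the paper (its Lemmas on $\tilde X_{i,j}$ and $\hat X_{i,j}$, proved by computer algebra). But without the bijection to decorated paths and the resulting two-index recurrence, the proposal has no rigorous starting point, so as written it does not constitute a proof strategy that could be completed.
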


\pagebreak

\begin{figure}[ht!]
    \centering

    \tikzstyle{mycircle}=[circle, draw, minimum size=0.5cm]
    
    \begin{tikzpicture}[->,>=stealth,line width=0.75pt,level/.style={sibling distance = 2.5cm/#1, level distance = 1.5cm}] 
    \node[mycircle] (node1-1) at (0,0) {};
    \node[mycircle] (node1-2) at (-1,-1) {};
    \node[rectangle, draw, minimum size=0.5cm] (sink1) at (-2,-2) {};
    \draw (node1-1) -- (node1-2);
    \draw (node1-2) -- (sink1);
    \draw[red] (node1-1) to[bend left=60] (sink1);
    \draw[red] (node1-2) to[bend left=60] (sink1);
    \draw[->, red] (node1-2) to[bend left=75] (sink1);
    \end{tikzpicture}
    \hspace{0.5cm}
    \begin{tikzpicture}[->,>=stealth,line width=0.75pt,level/.style={sibling distance = 2.5cm/#1, level distance = 1.5cm}] 
    \node[mycircle] (node2-1) at (0,0) {};
    \node[mycircle] (node2-2) at (-1,-1) {};
    \node[rectangle, draw, minimum size=0.5cm] (sink2) at (-2,-2) {};
    \draw (node2-1) -- (node2-2);
    \draw (node2-2) -- (sink2);
    \draw[->, red] (node2-1) to[bend left=60] (node2-2);
    \draw[->, red] (node2-1) to[bend left=75] (node2-2);
    \draw[->, red] (node2-2) to[bend left=60] (sink2);
    \draw[->, red] (node2-2) to[bend left=75] (sink2);
    \end{tikzpicture}
    \hspace{0.5cm}
    \begin{tikzpicture}[->,>=stealth,line width=0.75pt,level/.style={sibling distance = 2.5cm/#1, level distance = 1.5cm}] 
    \node[mycircle] (node3-1) at (0,0) {};
    \node[mycircle] (node3-2) at (-1,-1) {};
    \node[rectangle, draw, minimum size=0.5cm] (sink3) at (-2,-2) {};
    \draw (node3-1) -- (node3-2);
    \draw (node3-2) -- (sink3);
    \draw[->, red] (node3-1) to[bend left=30] (node3-2);
    \draw[->, red] (node3-1) to[bend left=60] (sink3); 
    \draw[->, red] (node3-2) to[bend left=60] (sink3);
    \draw[->, red] (node3-2) to[bend left=75] (sink3);
    \end{tikzpicture}
    \hspace{0.5cm}
    \begin{tikzpicture}[->,>=stealth,line width=0.75pt,level/.style={sibling distance = 2.5cm/#1, level distance = 1.5cm}] 
    \node[mycircle] (node4-1) at (0,0) {};
    \node[mycircle] (node4-2) at (-1,-1) {};
    \node[rectangle, draw, minimum size=0.5cm] (sink4) at (-2,-2) {};
    \draw (node4-1) -- (node4-2);
    \draw (node4-2) -- (sink4);
    \draw[->, red] (node4-1) to[bend left=35] (sink4); 
    \draw[->, red] (node4-1) to[bend left=70] (node4-2); 
    \draw[->, red] (node4-2) to[bend left=60] (sink4);
    \draw[->, red] (node4-2) to[bend left=75] (sink4);
    \end{tikzpicture}

    \vspace{5mm}

    \begin{tikzpicture}[->,>=stealth,line width=0.75pt,level/.style={sibling distance = 2.5cm/#1, level distance = 1.5cm}]
    \node[mycircle] (top1) at (0,0) {};
    \node[mycircle] (right1) at (1,-1) {};
    \node[rectangle, draw, minimum size=0.5cm] (sink1) at (-1,-1) {};
    \draw (top1) -- (sink1);
    \draw (top1) -- (right1);
    \draw[->, red] (top1) to[bend left=30] (sink1);
    \draw[->, red] (right1) to[bend left=45] (sink1);
    \draw[->, red] (right1) to[bend left=60] (sink1);
    \draw[->, red] (right1) to[bend left=75] (sink1);
    \end{tikzpicture}
    \hspace{1.25cm}
    \begin{tikzpicture}[->,>=stealth,line width=0.75pt,level/.style={sibling distance = 2.5cm/#1, level distance = 1.5cm}]
    \node[mycircle] (top2) at (0,0) {};
    \node[mycircle] (right2) at (1,-1) {};
    \node[rectangle, draw, minimum size=0.5cm] (sink2) at (-1,-1) {};
    \draw (top2) -- (sink2);
    \draw (top2) -- (right2);
    \draw[->, red] (top2) to[bend left=30] (right2);
    \draw[->, red] (right2) to[bend left=45] (sink2);
    \draw[->, red] (right2) to[bend left=60] (sink2);
    \draw[->, red] (right2) to[bend left=75] (sink2);
    \end{tikzpicture}
    \hspace{1.25cm}
    \begin{tikzpicture}[->,>=stealth,line width=0.75pt,level/.style={sibling distance = 2.5cm/#1, level distance = 1.5cm}]
    \node[mycircle] (top3) at (0,0) {};
    \node[mycircle] (right3) at (1,-1) {};
    \node[rectangle, draw, minimum size=0.5cm] (sink3) at (-1,-1) {};
    \draw (top3) -- (sink3);
    \draw (top3) -- (right3);
    \draw[->, red] (top3) to[out=0,in=0,looseness=2.5] (sink3); 
    
    \draw[->, red] (right3) to[bend left=45] (sink3);
    \draw[->, red] (right3) to[bend left=60] (sink3);
    \draw[->, red] (right3) to[bend left=75] (sink3);
    \end{tikzpicture}   
    
    \caption{All $7$ ternary relaxed ternary trees with $2$ internal nodes.}
    \label{fig:ternrayn2}
\end{figure}

\section{Bijections and recurrences}

In this paper, we consider a special class of DAGs, in which the outgoing edges (equivalently, the children) are equipped with an order.

\begin{definition}
  An \emph{ordered DAG} is a directed acyclic graph where there a left to right ordering among the children.   
\end{definition}

This brings us to the main object of this paper: the class of relaxed trees, a subclass of ordered DAGs, defined as follows; see Figure~\ref{fig:ternrayn2} and \ref{fig:relaxedternary}.
The word relaxed signifies that they are a super-class of compacted trees, which are in bijection with trees after a compression procedure; see~\cite{flss90,ElveyPriceEtal2021Compacted}.

\begin{definition}
A \emph{relaxed $k$-ary tree} is an ordered DAG consisting of a unique source and a unique sink such that every node except the sink has out degree exactly $k$. 
  Its \emph{spine} is the spanning subtree created by the depth first search.
  The edges of the spine are called \emph{internal edges}, 
  while the other edges are called \emph{external edges} or \emph{pointers}.
  All nodes except the unique sink, are called \emph{internal nodes}.
\end{definition}

\begin{figure}[ht!]
    \centering

\begin{tikzpicture}[->,>=stealth,level/.style={sibling distance = 4cm/#1,
  level distance = 1.3cm}] 
\node [circle,draw] (root){8}
  child{node [circle,draw] (four) {4}
    child{node [rectangle,draw,minimum size=0.5cm] (one) {1}}
    child{node [circle,draw] (two) {2}}
    child{node [circle,draw] (three) {3}}
  }
  child{node [circle,draw] (six) {6}
    child{node [circle,draw] (five) {5}}
  }
  child{node [circle,draw] (seven) {7}
  };

\draw[->, red] (two) to[bend left] (one);
\draw[->, red] (two) to[bend left=40] (one);
\draw[->, red] (two) to[bend left=60] (one);
\draw[->, red] (three) to[bend left] (one); 
\draw[->, red] (three) to[bend left=40] (two); 
\draw[->, red] (three) to[bend left=60] (two); 
\draw[->, red] (five) to[bend left] (one);
\draw[->, red] (five) to[bend left=60] (two); 
\draw[->, red] (five) to[bend left=50] (three);
\draw[->, red] (six) to[bend left] (three);
\draw[->, red] (six) to[bend left] (four);
\draw[->, red] (seven) to[bend left] (six);
\draw[->, red] (seven) to[bend left] (five);
\draw[->, red] (seven) to[bend left=60] (five);

\end{tikzpicture}
\caption{Example of relaxed ternary tree with 7 internal nodes (circles) labelled in postorder. The unique sink is depicted by a square. The black edges belong to the spine, the red ones are so-called pointers.}
\label{fig:relaxedternary}
\end{figure}

We start as in \cite{ElveyPriceEtal2021Compacted} by drawing a bijection between relaxed $k$-ary trees and Dyck paths with weights on their horizontal steps. This is done so that we can convert internal nodes into vertical steps and pointers into horizontal steps.

Every internal node $u$ in the spine of a relaxed tree has a $k$-ary tree $T(u)$ associated with it where the nodes are traversed in postorder. 
\begin{definition}
    A \emph{compacted} $k$-ary tree is a special case of a relaxed $k$-ary tree where for any arbitrary two nodes $u,v$ in the spine, the $k$-ary trees associated with them $T(u), T(v)$ are not identical. 
\end{definition}

In order to count relaxed $k$-ary trees, we will now describe a bijeciton to a class of paths, which are easier to enumerate. Let us first define the specific paths.

\begin{definition}
A horizontally \( k \)-decorated path \( P \) is defined as a lattice path consisting of up steps $U=(0,1)$ and horizontal steps $H=(1,0)$ from \( (0, -1) \)  with decorations such that: 
\begin{itemize}
    \item The first step is an U step, and its removal leaves a path never crossing the diagonal $y=\frac{x}{k-1}$.
    \item Below each H-step, there is exactly one cross in one of the unit boxes below this H-step and $y=-1$. 
\end{itemize}
\end{definition}

\begin{figure}[t]
    \centering
    \includegraphics[width=0.95\textwidth]{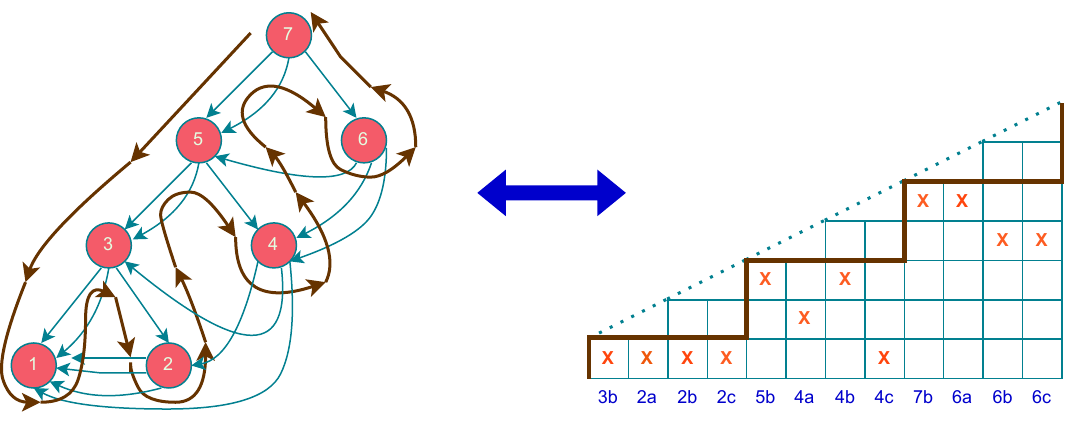}
    \caption{Example of the bijection described in Lemma~\ref{lem:bijectionrelaxed} between relaxed ternary trees and horizontally $3$-decorated paths. }
    \label{fig:bijection}
\end{figure}

The following lemma describes the bijection between relaxed $k$-ary trees and horizontally $k$-decorated paths.
An example is shown in Figure~\ref{fig:bijection}.

\begin{lemma}
    \label{lem:bijectionrelaxed}
    There is a bijection between horizontally $k$-decorated paths ending at $((k-1)n,n)$ and relaxed $k$-ary trees of size $n+1$.    
\end{lemma}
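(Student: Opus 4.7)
The plan is to build an explicit bijection by performing a depth-first search (DFS) from the source. Given a relaxed $k$-ary tree, traverse the $k$ ordered out-edges of each node from left to right, emitting a horizontal step $H$ each time a pointer is traversed (decorated with a cross in the unit box encoding the target's postorder label) and an up step $U$ each time a node is finished in postorder. Starting the path at $(0,-1)$ produces $n+1$ up steps (one per node, including the sink) and $(k-1)n$ horizontal steps (the spine contains $n$ of the $kn$ total out-edges), so the path ends at $((k-1)n,n)$. The first step is $U$ because at the start no node is labeled, so no pointer has a valid target; DFS thus descends along the leftmost spine chain from the root down to the sink, which, having no out-edges, is immediately left.

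To verify the diagonal constraint I would establish the invariant
\[
\sum_{j=1}^{s} a_j \;=\; x-(k-1)y+s,
\]
valid at every path position after the initial $U$, where $(x,y)$ is the current position, $s$ is the DFS stack size, and $a_j$ counts the out-edges of the $j$-th stack member processed so far. This identity is obtained by double-counting total processed out-edges. Since every stack member was reached via at least one of its parent's out-edges, and further processing only increases these counts between emissions, $a_j\geq 1$ at every path position, giving $\sum a_j \geq s$ and hence $y\leq x/(k-1)$. Just before a $U$-step, the top additionally satisfies $a=k$, strengthening the bound enough to preserve $y\leq x/(k-1)$ after that step. The cross placements are well-defined by construction: at any $H$-step of height $y$, there are exactly $y+1$ previously labeled nodes, matching the $y+1$ unit boxes between $y=-1$ and $y=y_H$.

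For the inverse, I would parse the path recursively under the grammar in which each node's emission consists of $k$ operations followed by a single $U$, where each operation is either a single $H$ (pointer, with target read off the cross) or the complete emission of a fresh spine child; the sink is the unique zero-operation node, identified as the first $U$ encountered before any operation. The main obstacle is showing that this parsing is unambiguous: at each operation one must decide between consuming the next $H$ as a pointer from the current node and starting a new spine recursion. I expect this to be resolved either by a lookahead argument that forces the choice (the remaining characters must decompose into valid sub-emissions) or, equivalently, by combining injectivity of the forward map with a common recursive enumeration of both sides. Either route establishes mutual inversion and completes the bijection.
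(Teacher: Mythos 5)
Your forward map coincides with the paper's (a DFS/post-order emission of $U$ for each finished node and $H$ for each pointer, with the cross recording the target's post-order rank), and your step counts are correct. Your verification of the diagonal constraint, however, is genuinely different from the paper's: the paper argues by induction on the depth of the fringe subgraph below each node, with a case distinction according to whether the node lies on the leftmost branch, whereas you derive the constraint from the stack identity $\sum_{j=1}^{s} a_j = x-(k-1)y+s$. That identity checks out (count processed out-edges two ways: as processed pointers plus processed spine edges, giving $x+(y+s)$, versus by owner, giving $ky+\sum_j a_j$ since the $y$ finished internal nodes each contribute $k$), and the two consequences you extract from it ($a_j\ge 1$ for every stack member at emission times, and $a_s=k$ just before a $U$-step) do yield $y\le x/(k-1)$ cleanly, with the sink's initial $U$ correctly excluded. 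This is a valid and arguably more mechanical replacement for the paper's induction.

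The genuine gap is in the inverse direction. You correctly identify that the left-to-right recursive parse is locally ambiguous (a symbol $H$ could be a pointer of the current node or the first symbol of a fresh spine child's emission), but you do not resolve the ambiguity: ``I expect this to be resolved either by a lookahead argument\dots or by combining injectivity with a common recursive enumeration'' is a plan, not a proof, and the second route would in any case require an independent enumeration of both sides, which you do not supply. The resolution --- and in effect what the paper's reverse construction does by creating the spine nodes along the up steps in post-order --- is to read the path from right to left: the final step is forced to be a $U$ (an $H$ ending at $((k-1)n,n)$ would violate the diagonal condition) and creates the root; thereafter each $U$ unambiguously creates a new spine child in the rightmost unfilled slot of the current node and makes it current, each $H$ unambiguously attaches a pointer in the rightmost unfilled slot, and control returns to the parent exactly when a node has received its $k$ children. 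Read in this direction no choice ever arises, the pointer targets are read off the crosses because the post-order labels are exactly the ranks of the $U$ steps, and mutual inversion with your forward map is immediate. You should replace your parsing sketch by this deterministic right-to-left reconstruction, or else actually carry out the lookahead argument you allude to.
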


\begin{proof}
Let $R$ be a relaxed $k$-ary tree of size $n+1$.
We transform this tree uniquely into a $k$-decorated path of size ending at~$((k-1)n,n)$.  
We traverse the spine of $R$ in post-order and label nodes from $1$ to $n+1$. 
During the traversal when we move up (i.e., traverse a spine edge the second time) we add an $U$ step 
and when crossing a pointer for the last time (i.e., circling around its parent) we add an $H$ step. 
Thereby, we associate each U step with the node we leave, and each H step with the parent of its pointer.
Moreover, below each H step we draw a cross in the unit box that intersects the column of this H step with the row of the U step that is associated with the target of the pointer of this H step.

Observe now, that the first step is always an U step, as the unique sink is processed first in post-order and has no pointers. 
Thus, after removing this step, we start at the origin $(0,0)$.
Furthermore, note that the $R$ consists of $n$ spine edges, and 
$(k-1)n$ pointers. 
Hence, the path consists of $n$ U steps and $(k-1)n$ H steps, and after attaching a final $U$ step, the path ends at $((k-1)n,n)$.
It remains to show that the path never crosses $y=\frac{x}{k-1}$. 
Note that before an $U$ step is added, the relaxed subgraph of which the associated node is the root has been traversed. 
The subgraph consisting only of the node $1$ is treated by the fact that we start at $(0,-1)$.
Now we proceed by induction on the depth of the subgraph. 
We need to distinguish, whether the node is on the left-branch from the root or not.
First, we assume it is not. 
The minimal cases is a single pointer, which leads to a step $H$ and increases the distance to the diagonal $y=\frac{x}{k-1}$ by one unit. 
Now assume that a subgraph with $i>0$ nodes is given.
The root has $k$ children, which all have size less than $i$. 
Thus by induction, each part does not cross $y=\frac{x}{k-1}$ and moves one unit further to the right from the diagonal.
Therefore, processing the $k$ children moves the path $k$ steps to the right from $\frac{x}{k-1}$, while (after that) processing the root, moves the path one unit up.
Thus the path stays below $y=\frac{x}{k-1}$ and is one unit further to the right.
Second, we assume that the node is on the diagonal. 
Then, the first child does not move the path one unit to the right, but the other $k-1$ do. 
By induction, the path still does not cross the diagonal $\frac{x}{k-1}$ but the distance now also stays the same.

Now in the reverse direction, let us take a horizontally k-decorated path ending at $((k-1)n,n)$. We want to recover a relaxed $k$-ary tree from this path.
We start by noting that the number of up steps in the path is $n$ and therefore the number of nodes is $n+1$. Our first step is a horizontal step.

We start by adding an H step which corresponds to the left most leaf labelled 1. Then along with the path P, we create the spine of the the relaxed $k$-ary tree. Thus along the up steps we create nodes of the spine as the $i$th up step creates the $(i+1)$th node in post-order. Along the horizontal steps we say that we add a pointer from the existing node that we are in to a node $j+1$ which is indicated by the cross placed on the level $j$ below the path (counting from the bottom up). Completing this process we get a relaxed $k$-ary tree with its nodes labelled in post-order. 
\end{proof}

This bijection allows directly to derive the following recurrence relations, following the step-by-step construction of the paths.
In Table~\ref{tab:relaxed} we give the initial terms for $k=2,\dots,5$.
   
\begin{proposition}
    Let $r_{n,m}$ be the number of horizontally decorated paths ending at $(n,m)$. 
\begin{align}
    \label{eq:relaxedNEsteps}
    r_{n,m} &= r_{n,m-1} + (m+1)r_{n-1,m}, 
             && \text{for } 1 \leq m \leq \frac{n}{k-1},\\             
    r_{n,m} &= 0  && \text{for } m > \frac{n}{k-1}, \notag\\
    r_{n,0} &= 1  && \text{for } n \geq 0. \notag
\end{align}
Thus, by Lemma~\ref{lem:bijectionrelaxed}, the number of relaxed $k$-ary trees with $n$ internal nodes is equal to $r_{(k-1)n,n}$.
\end{proposition}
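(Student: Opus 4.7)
The plan is to establish the recurrence by a last-step decomposition of the horizontally $k$-decorated paths ending at $(n,m)$. Since every step is either a $U$ or an $H$ step, I would split the enumeration into these two disjoint cases and check that appending or removing the terminal step respects all path conditions (first step $U$, staying weakly below the diagonal $y = x/(k-1)$ after removing the first step, and one cross decoration per $H$ step).

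If the last step is a $U$ step, then deleting it yields a valid path ending at $(n, m-1)$, and conversely any such shorter path can be extended by a $U$ step without violating the diagonal constraint, since the extended endpoint satisfies $m \le n/(k-1)$ by hypothesis. No $H$ step is affected, so all cross decorations transfer unchanged. This gives the contribution $r_{n, m-1}$.

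If instead the last step is an $H$ step at height $m$, then removing it together with its single cross decoration produces a valid path ending at $(n-1, m)$. Reversing this requires choosing a position for the cross below the newly appended $H$ step; the admissible unit boxes run from $y = -1$ up to $y = m$, giving exactly $m+1$ choices and hence a contribution $(m+1) \, r_{n-1, m}$. When $m > (n-1)/(k-1)$, the point $(n-1, m)$ lies above the diagonal, so $r_{n-1, m} = 0$ automatically rules out this case.

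The base condition $r_{n, 0} = 1$ follows because after the obligatory initial $U$ step, the only way to reach $(n, 0)$ is through $n$ consecutive $H$ steps at height $0$, each admitting only the single box between $y = -1$ and $y = 0$ for its cross. The vanishing $r_{n, m} = 0$ for $m > n/(k-1)$ is immediate from the diagonal constraint, and the identification of the number of relaxed $k$-ary trees with $n$ internal nodes as $r_{(k-1)n, n}$ follows directly from Lemma~\ref{lem:bijectionrelaxed}. I foresee no serious obstacle; the only point requiring care is the $+1$ in the factor $(m+1)$, which reflects the fact that crosses may be placed in boxes reaching down to $y=-1$ rather than $y=0$, and beyond that the argument is a routine last-step bijection.
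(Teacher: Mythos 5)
Your proposal is correct and matches the paper's intended argument: the paper derives the recurrence "directly... following the step-by-step construction of the paths," which is exactly your last-step decomposition, with the factor $m+1$ coming from the $m+1$ admissible boxes (from $y=-1$ up to $y=m$) for the cross under a terminal $H$ step at height $m$. You also correctly handle the boundary case where the endpoint lies on the diagonal via $r_{n-1,m}=0$, so nothing is missing.
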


\begin{table}[ht!]
\centering
\caption{Number of relaxed $k$-ary trees. 
Also the number of unlabeled acyclic single-source automata with $n$ transient states on a $k$-letter input alphabet. 
The matrix consisting of these rows is given by~\oeis{A128249}}
\label{tab:relaxed}
\begin{tabular}{@{}llc@{}}
\toprule
$k$ & Relaxed $k$-ary trees ($r_k$) & OEIS \\ 
\midrule
2 & $(	1, 3, 16, 127, 1363, 18628, 311250, 6173791,   \ldots)$ & \oeis{A082161} \\
3 & $(1, 1, 7, 139, 5711, 408354, 45605881, 7390305396, \ldots)$ & \oeis{A082162} \\
4 & $(1, 1, 15, 1000, 189035, 79278446, 63263422646, 86493299281972)$ & \oeis{A102102} \\
5 & $(1, 1, 31, 6631, 5470431, 12703473581, 68149976969707, \ldots)$ & --- \\ 
\bottomrule
\end{tabular}
\end{table}

In a next step, we will also state the respective recurrences for compacted $k$-ary trees and minimal DFAs accepting a accepting a finite language on an alphabet of size $k$. 
These results generalize~\cite[Proposition~2.11]{ElveyPriceEtal2021Compacted} and \cite[Proposition~5]{ElveyPriceFangWallner2020DFA}, respectively.

In a relaxed $k$-ary tree an internal node will be called a \emph{cherry} if all its children are pointers.
\begin{theorem}
  A relaxed $k$-ary tree C is a compacted $k$-ary tree iff no internal nodes $u$, $v$ have the same children in the same order. Moreover, if C is not a compacted $k$-ary tree there exists a pair $(u,v)$ with identical children and $v$ is a cherry and $u$ precedes $v$ in postorder. 
\end{theorem}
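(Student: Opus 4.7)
I would split the proof into three steps: the forward implication of the biconditional, its backward implication, and the cherry claim.

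For the forward implication, observe that $T(u)$ is determined recursively by $u$'s ordered DAG children (with $T(\mathrm{sink})$ being the single sink). Hence if two distinct internal nodes share the same ordered DAG children, their associated trees coincide, so $C$ is not compacted.

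For the backward implication, the plan is to argue by contradiction via a postorder minimality principle. Assuming $C$ is not compacted, choose a pair $(u, v)$ with $u \prec v$ in postorder and $T(u) = T(v)$ so that $v$ is minimal in postorder among such witnesses. For each $i$, let $u_i^\ast$ and $v_i^\ast$ denote the $i$-th DAG children of $u$ and $v$. Both precede $v$ in postorder (spine children and pointer targets always have strictly smaller labels), and $T(u) = T(v)$ yields $T(u_i^\ast) = T(v_i^\ast)$. If exactly one of them is the sink, a size comparison of the $T$-trees is impossible; if both are internal and distinct, then the pair ordered by postorder is a witness whose later element strictly precedes $v$, contradicting minimality. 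Hence $u_i^\ast = v_i^\ast$ for every $i$, so $u$ and $v$ share identical ordered children, contradicting the standing hypothesis.

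For the moreover part, I would establish the stronger statement that \emph{every} pair $(u,v)$ with identical ordered children and $u \prec v$ must have $v$ a cherry; existence then follows from the backward implication just proved. Suppose for contradiction $v$ has a spine child $v_i$. Since $v_i$'s unique spine parent is $v$ and not $u$, the $i$-th outgoing DAG edge of $u$ must be a pointer, forcing $v_i \prec u \prec v$. As postorder traverses each spine subtree as a contiguous block ending at its root, $u$ lies in the spine subtree of $v$; being distinct from $v$, it lies inside the spine subtree of some spine child $v_j$ of $v$ (necessarily internal, as $u$ is). If $u = v_j$, then $u$ would be its own $j$-th DAG child, violating acyclicity; otherwise $u \prec v_j$, but the same identical-children reasoning applied to index $j$ shows $u$'s $j$-th edge is a pointer to $v_j$, forcing $v_j \prec u$, a contradiction. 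Therefore $v$ is a cherry.

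The main obstacle I expect is the postorder bookkeeping in the last step, in particular rigorously justifying that $u$ falls into the spine subtree of some $v_j$. This relies on two facts that must be made explicit: each spine subtree occupies a contiguous interval of postorder labels ending at its root, and every pointer target strictly precedes its source in postorder. Once these are in hand, the rest is clean combinatorial bookkeeping.
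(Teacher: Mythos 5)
Your proof is correct, and while the forward implication coincides with the paper's (identical ordered children recursively force identical associated trees $T(u)=T(v)$, contradicting compactedness), the remainder takes a genuinely different route. The paper handles the converse and the cherry claim in one stroke: starting from a pair with $T(u)=T(v)$ it descends through first children, $T(u_1)=T(v_1)$, and so on, invoking infinite descent to land on a pair whose second element is a cherry. You instead (a) pick a witness pair with $T(u)=T(v)$ whose later element $v$ is postorder-minimal and show that all $k$ pairs of corresponding DAG children must coincide \emph{as nodes} (a smaller witness would otherwise exist), which already yields the biconditional, and then (b) prove the stronger structural fact that \emph{every} pair with identical ordered children and $u\prec v$ forces $v$ to be a cherry, using the contiguity of spine subtrees in postorder and the fact that pointers point backwards. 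Your version buys rigour precisely where the paper is thinnest: the paper's descent terminates with a pair having equal \emph{associated trees} and a cherry second element, but does not explain why such a pair has identical children as DAG nodes --- in a relaxed tree $T(w)=T(w')$ does not imply $w=w'$ --- and your minimality argument closes exactly that gap, while your part (b) delivers the cherry conclusion in the form the theorem actually asserts. The price is that the two postorder facts you flag (each spine subtree is a contiguous postorder block ending at its root; every pointer target strictly precedes its source) must be stated and proved, but both are standard for this model and are already used implicitly in the paper's bijection to decorated paths; with them in place your argument is complete.
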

    
\begin{proof}
Let us take C to be a compacted $k$-ary tree. Let two internal nodes $u$, $v$ have the same children in the same order. This implies that the $k$-ary trees associated with $u$ and $v$ are isomorphic, which violates the definition of a compacted tree. 
So conversely, if C is not a compacted tree then we can find at least one pair of internal nodes $(u,v)$ such that the $k$-ary tree associated with them will be the same. At this point if $v$ is a cherry then we are done. But if $v$ is not a cherry then we take the first child of both nodes $T(u_1)$ and $T(v_1)$, respectively, and note that these have to be identical as well. Now if $v_1$ is not a cherry then we can continue this process. Therefore by infinite descent we see that there has to be two nodes $u, v$ such that $v$ is a cherry and $u$ precedes $v$ in postorder. 
\end{proof}

 In order to continue, we need the following concept.
 First, recall that in each horizontally $k$-decorate path, each step $H$ is decorated by a marked box below. 
 Further, each vertical step $V$ corresponds to an internal node in the $k$-ary tree. 
 Thus, we assign to each step $V$ a $k$-tuple $(v_1,v_2,\ldots, v_k)$ corresponding to the nodes of its children. (This will help us to construct the compacted trees from the relaxed ones.) 
Let us define a \emph{$C_k$-decorated path} as a horizontally decorated path with the restriction that for consecutive steps $H^k V$ ($k$ steps $H$ followed by $V$) the $k$-tuple $(h_1,h_2,\ldots h_k)$ corresponding to the labels below the $H$-steps, we have $(h_1,h_2,\ldots h_k)\neq (v_1,v_2, \ldots v_k)$ for all preceding steps $V$. 

\medskip
\begin{proposition}
There is a bijection between the number of compacted $k$-ary trees of size $n$ and $C_k$-decorated paths of length $kn$.
\end{proposition}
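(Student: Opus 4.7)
The plan is to refine the bijection of Lemma~\ref{lem:bijectionrelaxed} to the compacted subclass. Let $T$ be a relaxed $k$-ary tree and $P$ its horizontally $k$-decorated path. In the DFS postorder traversal, each internal node $v$ of $T$ contributes exactly one $V$-step at the moment we leave it, preceded by a contiguous block of steps recording the processing of its $k$ children in order. A block of the form $H^k V$ occurs precisely when $v$ is a \emph{cherry} (all children are pointers): any spine child would force a nested $V$-step before we reach the final $V$-step of $v$. When $v$ is a cherry, the decorations under the $k$ consecutive $H$-steps are exactly the postorder labels of the pointer targets, i.e., the child-tuple $(v_1,\dots,v_k)$ of $v$.

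Next I would show that the $C_k$-decorated condition is equivalent to compactness via the preceding theorem. For the forward direction, if $T$ is compacted then no two internal nodes share the same children in the same order; in particular, for every cherry $v$ the tuple $(h_1,\dots,h_k)$ of decorations below its $H^k V$ block differs from the child-tuple of every earlier internal node (each of which corresponds to an earlier $V$-step in $P$), which is exactly the restriction defining $C_k$-decorated paths. For the reverse direction, if $T$ is not compacted then by the theorem there exist internal nodes $(u,v)$ with identical children in the same order such that $v$ is a cherry and $u$ precedes $v$ in postorder; the $H^k V$ block at $v$ encodes $(v_1,\dots,v_k)$, while the earlier $V$-step at $u$ carries $(u_1,\dots,u_k)=(v_1,\dots,v_k)$, violating the $C_k$ condition. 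The length count $kn$ matches the $n$ $V$-steps plus $(k-1)n$ $H$-steps on the path corresponding to a relaxed tree of the appropriate size.

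The main obstacle is making precise, for a general (non-cherry) $V$-step, what the tuple $(v_1,\dots,v_k)$ means directly from the path, so that the $C_k$ comparison is well-defined. For a given $V$-step one reads the $i$-th child either as the label on the cross below an $H$-step (when the $i$-th child is a pointer) or as the label of the $V$-step that closes the immediately preceding nested subpath (when the $i$-th child lies on the spine). This reading can be formalised by induction on the depth of the associated subgraph exactly as in the proof of Lemma~\ref{lem:bijectionrelaxed}. Once this reading is fixed, both directions above become immediate consequences of the bijection and the theorem, and restricting the inverse map from horizontally $k$-decorated paths to the $C_k$-decorated subclass yields a compacted $k$-ary tree.
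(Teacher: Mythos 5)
Your proposal is correct and follows essentially the same route as the paper: restrict the bijection of Lemma~\ref{lem:bijectionrelaxed} to the compacted subclass and use the preceding theorem (non-compactness is always witnessed by a cherry and an earlier node with the same child-tuple) to see that the $C_k$ condition, which only constrains $H^kV$ blocks, is exactly equivalent to compactness. In fact your write-up is considerably more detailed than the paper's own two-sentence proof, in particular in making explicit how the child-tuple of a general $V$-step is read off the path.
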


\begin{proof}
Note that the $C_k$-decorated paths are a subset of the horizontally $k$-decorated paths and the previous bijection sends the relaxed $k$-ary trees to horizontally $k$-decorated paths. The $C_k$-decorated paths have been constructed in this way to reinforce the condition that the relaxed trees corresponding to the these decorated paths are such that the $k$-ary trees associated with any two arbitrary nodes can never be identical. 
\end{proof}

This allows us, again by a direct step-by-step construction, to derive the following bivariate recurrence for $C_k$-decorated paths; see Table~\ref{tab:compacted}.

\begin{proposition}
\label{prop:compacted}
Let $c_{n,m}$ denote the number of $C_k$-decorated paths ending at $(n,m)$. Then 
\begin{align*}
c_{n,m} &= c_{n,m-1} + (m+1)c_{n-1,m} - (m-1)c_{n-k,m-1} && \text{for } 1 \leq m \leq \frac{n}{k-1},  \\
c_{n,m} &= 0  && \text{for } m > \frac{n}{k-1}, \\
c_{n,0} &= 1  && \text{for } n \geq 0.
\end{align*}
The number of compacted $k$-ary trees with $n$ internal nodes is $c_{(k-1)n,n}$.
\end{proposition}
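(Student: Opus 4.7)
The plan is to derive the recurrence by building a $C_k$-decorated path one step at a time, conditioning on its final step as in the relaxed case and then accounting for the forbidden cherry configurations imposed by the $C_k$-condition.

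If the final step is an $H$-step at height $m$, the path is obtained from a $C_k$-decorated path ending at $(n-1,m)$ by appending this step together with one of $m+1$ choices for its cross. Since a cherry is by definition a $V$-step preceded by $k$ horizontal steps, appending an $H$-step can never create a new cherry and hence never produces a new forbidden configuration; this yields $(m+1)c_{n-1,m}$. If instead the final step is a $U$-step, the underlying path ends at $(n,m-1)$, so the naive count is $c_{n,m-1}$. This overshoots by exactly those extensions $QH^kU$ in which $Q$ is a $C_k$-decorated path ending at $(n-k,m-1)$, the last $k$ steps of $QH^k$ form a horizontal block $H^k$, and the resulting new cherry has cross-tuple matching the children-tuple of some earlier $V$-step $V'$ in $Q$.

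The main step is therefore to show that the number of such forbidden extensions equals $(m-1)c_{n-k,m-1}$. For this I will exhibit a bijection with pairs $(Q,V')$ where $Q$ is a $C_k$-decorated path ending at $(n-k,m-1)$ and $V'$ is one of its $m-1$ $V$-steps. From such a pair one forms $QH^kU$ by copying the children-tuple $(v_1,\ldots,v_k)$ of $V'$ into the crosses of $H^k$; this is well-defined since $V'$ sits at height at most $m-1$, so its children carry labels that fit as crosses below $H$-steps at height $m-1$. Conversely, from a forbidden path one removes the last $k+1$ steps to recover $Q$ and identifies $V'$ as the unique earlier $V$-step whose children-tuple equals the cherry's cross-tuple.

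The main obstacle is the uniqueness of $V'$ in this converse, which rests on the claim that in any $C_k$-decorated path no two $V$-steps share the same children-tuple. If $V_a$ appears before $V_b$ with matching children-tuples and $V_b$ is a cherry, the $C_k$-condition is immediately violated; otherwise some child of $V_b$ is a spine child, which would force the corresponding child of $V_a$ to be the same node, contradicting the fact that a spine child has a unique parent. With this uniqueness in hand, the three contributions combine to give the stated recurrence. The boundary conditions $c_{n,0}=1$ and $c_{n,m}=0$ for $m>n/(k-1)$ are inherited from the geometry of horizontally $k$-decorated paths, and the enumeration of compacted $k$-ary trees of size $n$ by $c_{(k-1)n,n}$ follows directly from the bijection established in the preceding proposition.
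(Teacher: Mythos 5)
The paper itself offers no written proof of this proposition (it is asserted to follow ``by a direct step-by-step construction''), so your argument has to be judged against that intended construction. Your overall decomposition is the right one: condition on the last step, observe that appending an $H$-step can never complete a forbidden $H^kV$ pattern, and correct the naive count $c_{n,m-1}$ for an appended $U$-step via a bijection between the forbidden extensions $QH^kU$ and pairs $(Q,V')$. The count $m-1$ is also correctly accounted for: $Q$ has $m$ up-steps, but the first corresponds to the sink and carries no children-tuple, leaving $m-1$ internal-node $V$-steps, all of whose children have labels at most $m-1$ and are therefore realizable as crosses below $H$-steps at height $m-1$.

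The gap is in your justification of the uniqueness of $V'$, i.e., of the claim that no two $V$-steps of a $C_k$-decorated path share a children-tuple --- which is indeed, as you say, the crux, since without it the subtracted term would overcount. You argue that if $V_a$ precedes $V_b$, the tuples agree, and $V_b$ is not a cherry, then a spine child $w$ of $V_b$ would have two spine parents. But the children-tuple records only the identities of the child nodes, not the type of edge: $V_a$ may reach $w$ by a pointer while $V_b$ reaches it by a spine edge (this genuinely occurs, e.g., when $V_a$ lies in the spine subtree of $V_b$ and follows $w$ in postorder), so no contradiction arises and the step fails. The claim itself is true, but for a different reason: in postorder the last spine child of a non-cherry node $b$ is exactly the node labelled $b-1$, whereas every child (spine child or pointer target) of an earlier node $a<b$ has label at most $a-1<b-1$; hence the tuples cannot agree in that position, which forces $V_b$ to be a cherry, and that case is precisely the one excluded by the $C_k$-condition. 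Substituting this argument closes the gap and the rest of your proof stands.
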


\begin{table}[ht!]
\centering
\caption{Number of compacted $k$-ary trees, which are defined as relaxed $k$-ary trees with the additional constraint that each fringe subtree is unique.}
\label{tab:compacted}
\begin{tabular}{@{}llc@{}}
\toprule
$k$ & Compacted $k$-ary trees ($c_k$) & OEIS \\ 
\midrule
2 & $(1, 1, 3, 15, 111, 1119, 14487, 230943, 4395855, \ldots)$ & \oeis{A254789} \\
3 & $(1, 1, 7, 133, 5299, 371329, 40898599, 6561293893, \ldots)$ & --- \\
4 & $(1, 1, 15, 975, 182175, 75961695, 60422966655, 82450320955455, \ldots)$ & --- \\
5 & $(1, 1, 31, 6541, 5373571, 12458850121, 66790559866471, \ldots)$ & --- \\ 
\bottomrule
\end{tabular}
\end{table}

We have a similar recursion for the minimal DFAs over a $k$-ary alphabet. We use the same scheme for the bijection with the $H$ steps having a decoration. Since we have an accepting and rejecting state the $V$ steps are colored either white or green. 
Again, we summarize the first values of Table~\ref{tab:minimal}.

\begin{proposition}
\label{prop:minimalDFAs}
Let $b_{n,m}$ denote the number of automatic $B$-paths corresponding to DFAs ending in $(n,m)$. 
Then
\begin{align*}
b_{n,m} &= 2b_{n,m-1} + (m+1)b_{n-1,m} - m \, b_{n-k,m-1} && \text{for } 1 \leq m \leq \frac{n}{k-1},  \\
b_{n,m} &= 0  && \text{for } m > \frac{n}{k-1}, \\
b_{n,0} &= 1  && \text{for } n \geq 0.
\end{align*}
The number of minimal DFAs accepting a accepting a finite language on an alphabet of size $k$ with $n+1$ states is $b_{(k-1)n,n}$.
\end{proposition}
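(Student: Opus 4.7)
The plan is to mirror the step-by-step construction underlying Proposition~\ref{prop:compacted}, with two modifications reflecting the DFA setting: a two-colour decoration on each $V$-step (encoding the accept/reject status of the corresponding state) and an enlarged list of forbidden equivalences at cherries that includes the sink (the dead state of a DFA accepting a finite language).

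First, I would define an \emph{automatic $B$-path} as a horizontally $k$-decorated path in which every $V$-step carries one of two colours, accepting or rejecting, subject to the following local constraint: whenever the final $k+1$ steps of a prefix form the pattern $H^{k}V$, the pair consisting of the $k$-tuple of $H$-labels and the colour of the terminating $V$-step must differ both from the analogous pair attached to every preceding $V$-step and from the \emph{sink pair} in which all $k$ $H$-labels point to the bottom row and the $V$-step is rejecting. I would then extend the bijection of Lemma~\ref{lem:bijectionrelaxed} by sending each $V$-step to a transient state whose accept/reject status is encoded by its colour, each $H$-step to a transition to a previously constructed state via its label, and the DAG sink to the dead state (rejecting, self-looping on every letter). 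An infinite-descent argument in the style of the compacted-tree theorem then shows that Myhill--Nerode equivalence between transient states is always witnessed locally at a cherry, so that the local constraint above characterizes minimality of the constructed DFA, and automatic $B$-paths ending at $((k-1)n, n)$ are in bijection with minimal DFAs with $n+1$ states over a $k$-letter alphabet accepting a finite language.

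Next, I would derive the recurrence by conditioning on the last step of a path to $(n, m)$. A final $V$-step contributes $2\,b_{n, m-1}$, where the factor $2$ accounts for its colour. A final $H$-step contributes $(m+1)\,b_{n-1, m}$ from the $m+1$ admissible heights $-1, 0, \ldots, m-1$ of the decorating cross; appending an $H$-step cannot complete a cherry, so no constraint is newly violated. These two contributions over-count precisely those paths whose last $k+1$ steps form a cherry $H^{k}V$ violating the local constraint. Matching this cherry to any one of the $m-1$ preceding $V$-steps or to the sink pair forces both the $k$ $H$-labels and the $V$-colour, leaving an arbitrary automatic $B$-path ending at $(n-k, m-1)$ as the admissible prefix; since distinct preceding $V$-steps carry distinct (tuple, colour) pairs in an automatic $B$-path and none of them coincides with the sink pair, the $m-1+1=m$ forbidden matches are pairwise distinct, so the subtraction $m\,b_{n-k, m-1}$ is exact.

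The principal obstacle will be the adaptation of the infinite-descent step from identity of fringe subtrees (the compacted case) to Myhill--Nerode equivalence of states. Once this is in place, the appearance of the coefficient $m$ in the recurrence---rather than the $m-1$ of Proposition~\ref{prop:compacted}---is explained naturally by the sink contributing one additional forbidden match alongside the $m-1$ preceding transient $V$-steps.
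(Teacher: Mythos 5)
Your proposal is correct and follows exactly the route the paper intends: the paper itself gives only a two-sentence sketch (same path bijection, $V$-steps two-coloured for accept/reject), and your elaboration---including the identification of the extra forbidden configuration "all pointers to the dead state, rejecting" as the source of the coefficient $m$ in place of the $m-1$ of Proposition~\ref{prop:compacted}---is the intended argument. The only caveat is the one you already flag yourself, namely that the infinite-descent/cherry lemma must be re-proved for Myhill--Nerode equivalence (with the descent now allowed to terminate at the dead state), which the paper also leaves implicit.
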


\begin{table}[ht!]
\centering
\caption{Number of minimal deterministic finite automata recognizing a finite $k$-letter language.}
\label{tab:minimal}
\begin{tabular}{@{}llc@{}}
\toprule
$k$ & DFA ($m_k$) & OEIS \\ 
\midrule
2 & $(1, 1, 6, 60, 900, 18480, 487560, \ldots)$ & \oeis{A331120} \\
3 & $(1,1,14,532,42644, 6011320, 1330452032\ldots)$ & ---  \\
4 & $(1,1,30,3900,1460700, 1220162880\ldots)$ & ---  \\
5 & $(1,1,62,26164,43023908, 199596500056\ldots)$ & ---  \\ 
\bottomrule
\end{tabular}
\end{table}

\newcommand{\dd}{d}

\section{Asymptotics of relaxed \texorpdfstring{$k$}{k}-ary trees}

The goal of this section is to prove our main Theorem~\ref{theo:relaxedasymptotics} on relaxed $k$-ary trees.
As the proof is rather technical and complex, we will begin with an overview of the main steps.

First, in Section~\ref{sec:rectrafo} we transform the recurrence~\eqref{eq:relaxedNEsteps} into a new recurrence $(d_{i,j})_{i,j\geq 0}$ by changing the used steps and extracting the super-exponential and certain exponential and polynomial contributions. 
Second, in Section~\ref{sec:heuristic} we perform a heuristic analysis of the asymptotics of $(d_{i,j})_{i,j\geq 0}$, and guess the shape for a rigorous proof.
Using this shape, we build and prove in Section~\ref{sec:explicitbounds} explicit sequences $(\tilde{X}_{i,j})_{i,j\geq0}$ and $(\hat{X}_{i,j})_{i,j\geq0}$ that satisfy the same recurrence as $(d_{i,j})_{i,j\geq 0}$ but with the equality sign replaced by the inequality signs $\leq$ and $\geq$, respectively.
This is the most technical part in which we heavily rely on computer algebra.
Finally, in Section~\ref{sec:proofmainresult} we use these explicit sequences to prove inductively asymptotically tight upper and lower bounds, which yield our main theorem.

\subsection{Transformation into a Dyck-like recurrence}
\label{sec:rectrafo}

We start from recurrence~\eqref{eq:relaxedNEsteps} and our goal is to determine the asymptotics of $r_{(k-1)n,n}$. 
First, we observe that the path closest to the diagonal given by $(H^{k-1}V)^n$ has a weight $(n!)^{k-1}$ as there are $n$ $H$-steps at height $0,1,\dots,n-1$. 
All other paths will get smaller weights. 
Therefore, after rescaling with this weight, all paths have a weight bounded by one. 
Note that it is technically easier to work with the rescaling $((k-1)n)!$, which has by Stirling's formula the same super-exponential growth.
The difference in the exponential growth is a factor $(k-1)^{(k-1)n}$, which we will take into account later in~\eqref{eq:rkndijrelaxed}.
Thus, we set $\tilde{r}_{n,m} = \frac{r_{n,m}}{((k-1)n)!}$ and get the new recurrence
\begin{align*}
    \tilde{r}_{n,m} &= \frac{m+1}{n} \tilde{r}_{n-1,m} + \tilde{r}_{n,m-1}, 
             & m &\leq \frac{n}{k-1}.
\end{align*}
Next, as we are interested in the asymptotics of $\tilde{r}_{(k-1)n,n}$, we will transform this recurrence from North and East steps, to Dyck-like up and down steps. 
For this purpose, we define the new variables
\begin{align*}
    \begin{cases}
        i = n+m, \\
        j = n-(k-1)m,
    \end{cases}
    \quad \text{ equivalently } \qquad
    \begin{cases}
       n = \frac{(k-1)i + j}{k}, \\
       m = \frac{i-j}{k}.
    \end{cases}
\end{align*}
The idea behind this choice, is that $i$ tracks the length of the walk, i.e., the number of steps, and $j$ the distance to the diagonal. By this choice, we have $i,j \geq 0$. 
This gives the following generalized Dyck-like recurrence:
\begin{align}
    \tilde{\dd}_{i,j} &= \tilde{U}(i,j) \tilde{\dd}_{i-1,j-1} + \tilde{\dd}_{i-1,j+k-1}, & i >0, j &\geq 0,
\end{align}
with the initial condition $\tilde{\dd}_{0,0}=1$ and the weight $\tilde{U}(i,j) = \frac{i-j+k}{(k-1)i+j}$.
Thus, we have the new steps $(1,1)$ and $(1,-k+1)$. 
This is a simple directed lattice path model with space-dependent weights. 
Note that as the change in the $x$-direction is one unit per step, a path of length $n$ consists of $n$ steps, and it suffices therefore to track the current altitude.
Therefore, from now on we consider only the changes in the $y$-direction, which we call jumps $+1$ and $-(k-1)$.

\medskip

Let us now look at the drift of this new model.
For now, we assume that $i$ is large and that $j=o(i)$.
The \emph{drift} $\tilde{\delta}(i,j)$ at a point $(i,j)$ is defined as the expected next jump size when leaving $(i,j)$. 
Therefore, at $(i,j)$ the jump $+1$ gets weight $\tilde{U}(i+1,j+1)$ and the jump $-(k-1)$ weight $1$ for $j\geq k-1$ and $0$ otherwise. 
Hence, we get that $\tilde{\delta}(i,j) =1 $ for $0 \leq j \leq k-2$ and
\begin{align*}
    \tilde{\delta}(i,j) &= 
    \frac{i-j+k}{(k-1)(i+1) + j+1} - (k-1) 
     = -\frac{k(k-2)}{k-1} + \LandauO\left(\frac{1}{i}\right),
        & \text{ for }~ j > k-2.
\end{align*}
Note that in the binary case ($k=2$), the first term is zero and therefore the drift is converging to zero for large $i$. 
However, in the general case the drift is negative for large $i$. 
But we can define the following transformation to achieve the same behavior:
$\dd_{i,j} = (k-1)^{2n}\tilde{\dd}_{i,j}$ where $n=\frac{(k-1)i + j}{k}$.
This gives the final Dyck-like recurrence
\begin{align}
    \label{eq:Dyckrecurelaxed}
    \dd_{i,j} &= U(i,j) \dd_{i-1,j-1} + \dd_{i-1,j+k-1}, & i >0, j &\geq 0,
\end{align}
with the initial condition $\dd_{0,0}=1$ and the following weight for the up step
\begin{align}
    \label{eq:Uijrelaxed}
    U(i,j) = \frac{ (k-1)^2(i - j + k)}{(k-1) i + j}%
    =(k - 1) \left(1-\frac{k(j - k+1)}{(k-1) i + j} \right).
\end{align}
The drift $\delta(i,j)$ in this model is again $\delta(i,j) =1 $ for $0 \leq j \leq k-2$ but now we have
\begin{align*}
    \delta(i,j) &= 
        -\frac{k(k-1)(j-k+2)}{k(i+1) -i+j} = -\frac{k(j-k+2)}{i} + \LandauO\left(\frac{1}{i^2}\right),
        & \text{ for }~ j > k-2,
\end{align*}
and therefore converges to zero.
In this final model~\eqref{eq:Dyckrecurelaxed}, we are interested in $d_{kn,0}$ since 
\begin{align}
    \label{eq:rkndijrelaxed}
    \begin{aligned}
    r_{(k-1)n,n}
    &= ((k-1)n)! \tilde{r}_{(k-1)n,n}
    = ((k-1)n)! \tilde{\dd}_{kn,0}
    = \frac{((k-1)n)!}{(k-1)^{2(k-1)n}}\dd_{kn,0}\\
    &\sim \sqrt{k-1} (2\pi)^{1-k/2} \frac{(n!)^{k-1}}{(k-1)^{(k-1)n}} n^{1-k/2} \dd_{kn,0},
    \end{aligned}
\end{align}
where the last equality follows by Stirling's formula.

\subsection{Heuristic analysis}
\label{sec:heuristic}

\newcommand{\bb}{B}
\newcommand{\aiarg}{\alpha}

In the next step, we will now heuristically analyze the recurrence~\eqref{eq:Dyckrecurelaxed}. 
Inspired by the binary case and numerical experiments, we use the ansatz
\begin{align}
    \label{eq:ansatzone}
    d_{i,j} = h(i) f\left( \frac{j+1}{i^{1/3}} \right).
\end{align}
The function $h(i)$ captures a macroscopic amplitude that is independent of $j$, while the function $f(x)$ captures the local behavior around the origin.
The rescaling by $i^{1/3}$ is motivated by an analogy to pushed Dyck path and the binary case~\cite{ElveyPriceEtal2021Compacted}.

Substituting this ansatz into the recurrence~\eqref{eq:Dyckrecurelaxed} and reordering we get
\begin{align*}
    \frac{h(i)}{h(i-1)} &= \frac{U(i,j) f\left( \frac{j}{(i-1)^{1/3}} \right) + f\left( \frac{j+k}{(i-1)^{1/3}} \right)}{f\left( \frac{j+1}{i^{1/3}} \right)}.
\end{align*}
As the left-hand side is independent of $j$, for large $i$ both sides should have an expansion in $i$ with whose coefficients are independent of $j$. 
Let us now zoom into the region $i^{1/3}$ by binding the variables $i$ and $j$ using the transformation $j=x \, i^{1/3}+1$. 
This gives
\begin{align}
\label{eq:hquot}
    \frac{h(i)}{h(i-1)} &= \frac{U(i,x \, i^{1/3}+1) f\left( \frac{x \, i^{1/3}+1}{(i-1)^{1/3}} \right) + f\left( \frac{x \, i^{1/3}+k+1}{(i-1)^{1/3}} \right)}{f(x)}.
\end{align}
Assuming now that $f(x)$ has a convergent Taylor series expansion around $x$, we get
\begin{align*}
\frac{h(i)}{h(i-1)} &= 
k + \frac{k}{2}\frac{(k-1)f''(x) - 2 x f(x)}{f(x)} i^{-2/3} + \LandauO(i^{-1}).
\end{align*}
Hence, in order to be consistent, the left-hand side needs to have an expansion in decreasing powers of $i^{1/3}$. In particular, it starts like
\begin{align*}
    \frac{h(i)}{h(i-1)} &= k + c \, i^{-2/3} + \LandauO(i^{-1}),
\end{align*}
for a constant $c \in \mathbb{R}$.
Comparing coefficients, we see that $f(x)$ has to satisfy the differential equation
\begin{align*}
    f''(x) &= \frac{2 (k x + c)}{k(k-1)} f(x).
\end{align*}
This differential equation is, as in the binary case, solved by the Airy functions of the first and second kind. 
Due to the combinatorial nature of the problem, we require $f(x) \geq 0$ for $x \geq 0$ as well as $\lim_{x \to \infty} f(x) = 0$. 
Therefore, we get up to a multiplicative constant the following solution
\begin{align*}
    f(x) &= \Ai \left(\frac{2^{1/3}(k x + c)}{k (k-1)^{1/3}} \right).
\end{align*}
Now, note that due the boundary conditions $d_{n,-1}=0$ we must have $f(0)=0$. Together, with $f(x)\geq 0$ for $x\geq 0$, this means that the argument of the Airy function must evaluate to the largest zero $a_1 \approx -2.338$ of $\Ai(x)$. 
Therefore
\begin{align*}
    c = \frac{k (k-1)^{1/3}}{2^{1/3}}a_1
\end{align*}
and 
\begin{align*}
    f(x) &= \Ai \left(a_1 + \bb \, x  \right), \qquad
    \text{ where } \qquad \bb = \bigg(\frac{2}{k-1}\bigg)^{1/3}.
\end{align*}

In order to capture the polynomial term, we need to use a more general ansatz than~\eqref{eq:ansatzone} that includes a second function $g(x)$ capturing the scale of order $i^{-1/3}$. In particular, we use
\begin{align}
    \label{eq:ansatztwo}
    d_{i,j} = h(i) \left( f\left( \frac{j+1}{i^{1/3}} \right) + \frac{g\left( \frac{j+1}{i^{1/3}} \right)}{i^{1/3}} \right).
\end{align}
Note that the function $g(x)$ will influence the terms starting from order $i^{-1}$ in~\eqref{eq:hquot}.
Analogous computations as performed above for $f(x)$ lead us to the ansatz 
\begin{align*}
    g(x) &= -\left( \frac{(k+2)x^2}{6(k-1)} + \frac{2^{2/3} a_1 (k-2)x}{6(k-1)^{2/3}} \right) f(x).
\end{align*}

\subsection{Explicit bounds}
\label{sec:explicitbounds}

All these heuristic arguments above guide us to the following two results.
These generalize~\cite[Lemmas~4.2 and 4.4]{ElveyPriceEtal2021Compacted}, whose results are recovered by setting $k=2$.
The proofs are analogous to~\cite{ElveyPriceEtal2021Compacted,Changetal2023dcombining}; for the details we refer to the accompanying Maple worksheet.
In particular, this shows that the method developed in~\cite{ElveyPriceEtal2021Compacted} is powerful enough to analyze bivariate recurrences that include parameters.

\begin{lemma}
	\label{lem:RelaxedXLower}
	Let $k \geq 2$ be an integer and $\bb = (\frac{2}{k-1})^{1/3}$. 
    For all $i,j\geq0$ let
	\begin{align*}	
		\tilde{X}_{i,j} &:= \left(
            1  
            - \frac{2^{2/3} a_1 (k-2)}{6(k-1)^{2/3}} \frac{j}{i^{2/3}}
            - \frac{k+2}{6(k-1)} \frac{j^2}{i}
            + \frac{7k-11}{6(k-1)} \frac{j}{i} \right.\\
            &\qquad+ \left. \frac{a_1^2(k-2)^2 \bb^{4}}{72} \frac{j^2}{i^{4/3}}
            + \frac{a_1(k^2-4)^2 \bb^{5}}{72} \frac{j^3}{i^{5/3}}
            \right)
            \Ai\left(a_{1}+\frac{\bb(j+1)}{i^{1/3}}\right), \\
		 \tilde{s}_i &:= k \left(1+\frac{a_1}{\bb i^{2/3}} + \frac{7k-6}{6i} - \frac{1}{i^{7/6}} \right).
	\end{align*}
	Then, for any $\varepsilon>0$, there exists an $\tilde{i}_0$ such that
	\begin{align*}
		\tilde{X}_{i,j}\tilde{s}_{i} \leq U(i,j) \tilde{X}_{i-1,j+1} + \tilde{X}_{i-1,j-1}
	\end{align*}
	for all $i\geq \tilde{i}_0$ and for all $0 \leq j < i^{2/3-\varepsilon}$, where $U(i,j)$ is defined in~\eqref{eq:Uijrelaxed}.
\end{lemma}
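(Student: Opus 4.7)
The plan is to substitute the explicit formulas into
\[
\Delta(i,j) \;:=\; U(i,j)\tilde{X}_{i-1,j+1} + \tilde{X}_{i-1,j-1} - \tilde{s}_i\tilde{X}_{i,j},
\]
and show $\Delta(i,j)\geq 0$ by an asymptotic expansion in $i^{-1/3}$. The construction of $\tilde{X}$ and $\tilde{s}$ is dictated precisely by the heuristic analysis in Section~\ref{sec:heuristic}: the ansatz~\eqref{eq:ansatztwo} together with the matching of orders in~\eqref{eq:hquot} forces exactly these coefficients, so that the leading orders in the expansion of $\Delta$ cancel identically and only a controlled positive remainder survives.

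First I would bring all three occurrences of the Airy function, namely $\Ai(a_1+\bb j/(i-1)^{1/3})$, $\Ai(a_1+\bb(j+2)/(i-1)^{1/3})$ and the reference value $\Ai(\aiarg)$ with $\aiarg := a_1+\bb(j+1)/i^{1/3}$, to a common form by Taylor expansion around $\aiarg$. Using $\Ai''(x)=x\Ai(x)$ recursively, every higher derivative $\Ai^{(m)}(\aiarg)$ reduces to a polynomial in $\aiarg$ times $\Ai(\aiarg)$ or $\Ai'(\aiarg)$. After dividing $\Delta(i,j)$ by $\Ai(\aiarg)$, one is left with a formal series in $i^{-1/3}$ whose coefficients are rational in $j/i^{1/3}$ and in the bounded quantity $\Ai'(\aiarg)/\Ai(\aiarg)$. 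In parallel, I would expand the rational weight $U(i,j)$ from~\eqref{eq:Uijrelaxed} and the polynomial prefactors inside $\tilde{X}_{i-1,j\pm 1}$ in powers of $1/i$, treating $j$ as a variable constrained by $j<i^{2/3-\varepsilon}$, so that $j/i^{1/3}<i^{1/3-\varepsilon}$.

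Collecting terms by order $i^{-m/3}$ and separating the coefficients of $\Ai(\aiarg)$ and $\Ai'(\aiarg)$ (which are linearly independent as functions on the relevant range), one checks that each coefficient through the critical order vanishes. The $\Ai$-component at order $i^{-2/3}$ enforces that $a_1$ is the largest zero of $\Ai$; the $\Ai'$-component at the same order produces the term $k a_1/(\bb i^{2/3})$ in $\tilde{s}_i$; the polynomial prefactor of $\tilde{X}_{i,j}$, including the two higher corrections in $j^2/i^{4/3}$ and $j^3/i^{5/3}$, together with the $i^{-1}$-correction $k(7k-6)/(6i)$ in $\tilde{s}_i$, is exactly what is needed to cancel the coefficients at order $i^{-1}$. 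The deliberately inserted term $-1/i^{7/6}$ in $\tilde{s}_i$ then contributes $+\tilde{X}_{i,j}/i^{7/6}$ to $\Delta(i,j)$, a positive quantity which dominates all genuine remainder terms: those are $\LandauO(i^{-4/3})$ times bounded powers of $j/i^{1/3}<i^{1/3-\varepsilon}$, hence $o(i^{-7/6})$ uniformly in $j$. For $i$ larger than some threshold $\tilde{i}_0(\varepsilon)$, this yields $\Delta(i,j)\geq 0$.

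The principal obstacle is not conceptual but computational: the expansion produces a large number of cross-terms mixing shifted Airy values, Airy derivatives, the rational weight $U(i,j)$, and multiple polynomial prefactors, and verifying by hand that all cancellations through order $i^{-1}$ occur exactly is infeasible. This is precisely the point at which the method of~\cite{ElveyPriceEtal2021Compacted} relies on computer algebra, and the paper defers the explicit verifications to the accompanying Maple worksheet. A secondary subtlety is the range restriction $j<i^{2/3-\varepsilon}$: beyond it the local Taylor expansion of $\Ai$ at $\aiarg$ no longer gives uniform control on the remainders, so the complementary regime must be handled separately using the superexponential decay of $\Ai$ on the positive real axis, a step that will be needed later when combining this lemma with its upper-bound counterpart to prove Theorem~\ref{theo:relaxedasymptotics}.
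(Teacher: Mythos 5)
Your overall strategy --- substitute, reduce the shifted Airy values to $\Ai(\alpha)$ and $\Ai'(\alpha)$ at a common argument $\alpha=a_1+B(j+1)/i^{1/3}$ (with $B=(\tfrac{2}{k-1})^{1/3}$) via Taylor expansion and the ODE $\Ai''(x)=x\Ai(x)$, match coefficients order by order, and let the deliberately inserted $-1/i^{7/6}$ in $\tilde{s}_i$ absorb the remainder --- is exactly the strategy of the paper, which itself only sketches the argument and defers the computation to the accompanying Maple worksheet and to the binary case of Elvey Price--Fang--Wallner. However, your error control contains a genuine gap: you treat the result as a series in $i^{-1/3}$ with bounded coefficients, and neither half of that claim holds uniformly on $0\le j<i^{2/3-\varepsilon}$. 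First, $\Ai'(\alpha)/\Ai(\alpha)$ is \emph{not} bounded there: for $j$ near $i^{2/3-\varepsilon}$ one has $\alpha\to\infty$ and $\Ai'(\alpha)/\Ai(\alpha)\sim-\sqrt{\alpha}$, which grows like $i^{1/6-\varepsilon/2}$. Second, a remainder of size $\LandauO(i^{-4/3})$ times a power of $j/i^{1/3}$ need not be $o(i^{-7/6})$: for instance $i^{-4/3}(j/i^{1/3})^{2}\le i^{-2/3-2\varepsilon}$, which for small $\varepsilon$ is far larger than $i^{-7/6}$. So the single positive contribution $\tilde{X}_{i,j}/i^{7/6}$ cannot dominate all remainders by a crude order count in $i$ alone.

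This is precisely why the paper (following the binary case) invokes ``generalized Newton polygons'': one writes the normalized difference as a finite signed sum of monomials $j^{a}i^{-b}$ (with half-integer exponents admitted, coming from $\Ai'/\Ai\sim-\sqrt{\alpha}$) and checks that every negative monomial is dominated, uniformly over the stated range of $j$, by points on the convex hull spanned by the positive ones. It is also the reason the two correction terms $\frac{a_1^2(k-2)^2B^{4}}{72}\frac{j^2}{i^{4/3}}$ and $\frac{a_1(k^2-4)^2B^{5}}{72}\frac{j^3}{i^{5/3}}$ appear in $\tilde{X}_{i,j}$: as the paper remarks, they do not affect the final asymptotics but are needed to place points of the polygon correctly. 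Your writeup would be essentially complete if you replaced ``hence $o(i^{-7/6})$ uniformly in $j$'' by this dominance argument, or restricted the crude bound to $j=\LandauO(i^{1/3})$ and treated larger $j$ separately. A smaller inaccuracy: the value $a_1$ is not forced by the order-$i^{-2/3}$ matching but by the boundary condition at $j=0$ together with positivity of the prefactor, as in the heuristic of Section~3.2.
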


\begin{lemma}
	\label{lem:RelaxedXUpper}
	Let $k \geq 2$ be an integer and $\bb = (\frac{2}{k-1})^{1/3}$. 
	Choose $\eta > \frac{(k+2)^2}{72 (k-1)^2}$ fixed and 
	for all $i,j\geq0$ let
	\begin{align*}	
		\hat{X}_{i,j} &:= \left(
            1  
            - \frac{2^{2/3} a_1 (k-2)}{6(k-1)^{2/3}} \frac{j}{i^{2/3}}
            - \frac{k+2}{6(k-1)} \frac{j^2}{i}
            + \frac{7k-11}{6(k-1)} \frac{j}{i} \right.\\
            &\qquad+ \left. \frac{a_1^2(k-2)^2 \bb^{4}}{72} \frac{j^2}{i^{4/3}}
            + \frac{a_1(k^2-4)^2 \bb^{5}}{72} \frac{j^3}{i^{5/3}}
            + \eta \frac{m^4}{n^2}
            \right)
            \Ai\left(a_{1}+\frac{\bb(j+1)}{i^{1/3}}\right), \\
		 \hat{s}_i &:= k \left(1+\frac{a_1}{\bb i^{2/3}} + \frac{7k-6}{6i} + \frac{1}{i^{7/6}} \right).
	\end{align*}
	Then, for any $\varepsilon>0$, there exists an $\hat{i}_0$ such that
	\begin{align*}
		\hat{X}_{i,j}\hat{s}_{i} \geq U(i,j) \hat{X}_{i-1,j+1} + \hat{X}_{i-1,j-1}
	\end{align*}
	for all $i\geq \hat{i}_0$ and for all $0 \leq j < i^{1-\varepsilon}$, where $U(i,j)$ is defined in~\eqref{eq:Uijrelaxed}.
\end{lemma}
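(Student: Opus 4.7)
The plan is to mirror the proof of Lemma~\ref{lem:RelaxedXLower} with the two strategic sign choices encoded in the statement: the sign of the $i^{-7/6}$ term in $\hat{s}_i$ is flipped, and the nonnegative correction $\eta\, m^4/n^2$ is added to $\hat{X}_{i,j}$. Writing the residual
\begin{equation*}
E_{i,j} := \hat{X}_{i,j}\hat{s}_i - U(i,j)\,\hat{X}_{i-1,j+1} - \hat{X}_{i-1,j-1},
\end{equation*}
the task is to prove $E_{i,j}\geq 0$ for all $i\geq\hat{i}_0$ and $0\leq j < i^{1-\varepsilon}$.

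First, I would normalise $E_{i,j}$ by dividing through by the common factor $\Ai(a_1+\bb(j+1)/i^{1/3})$. Taylor-expanding the two neighbouring Airy values around this central argument and using the Airy equation $\Ai''(x) = x\,\Ai(x)$ to eliminate all derivatives of order $\geq 2$, each shifted value reduces to a power series in $i^{-1/3}$ whose coefficients are polynomial in $j$ and in the Airy quotient $q := \Ai'/\Ai$ at the central argument. Substituting these expansions, the expansion of $U(i,j)$ from~\eqref{eq:Uijrelaxed}, and the polynomial prefactor of $\hat{X}_{\cdot,\cdot}$ into $E_{i,j}$ yields a Laurent series in $i^{-1/3}$ plus an explicit truncation remainder. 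The calibration performed in Section~\ref{sec:heuristic}, which shapes the six correction terms in $\hat{X}_{i,j}$ and the first two terms of $\hat{s}_i$, ensures that the coefficients at orders $i^0$, $i^{-1/3}$, $i^{-2/3}$ and $i^{-1}$ cancel identically in $j$ and $q$. The $+i^{-7/6}$ term in $\hat{s}_i$ then leaves a positive slack of order $k/i^{7/6}$ at the next scale.

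Second, I would split the analysis into two regimes. In the inner regime $0\leq j < i^{2/3-\varepsilon}$, the rescaled variable $x = \bb(j+1)/i^{1/3}$ stays bounded, $\Ai(a_1+x)$ is bounded away from zero, and the truncation remainder is $\LandauO(i^{-4/3})$ uniformly in $j$; the $k/i^{7/6}$ slack therefore dominates and $E_{i,j}\geq 0$ for all $i\geq\hat{i}_0$. In the outer regime $i^{2/3-\varepsilon}\leq j < i^{1-\varepsilon}$, the argument of $\Ai$ grows, so I would insert the standard asymptotics $\Ai(x)\sim \tfrac{1}{2\sqrt{\pi}}\,x^{-1/4}\exp(-\tfrac{2}{3}x^{3/2})$ and $q(x)\sim -\sqrt{x}$ as $x\to\infty$; these turn the shifted Airy ratios into explicit expansions in $j/i^{1/3}$. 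Here the $\eta\, m^4/n^2$ correction provides the dominant new contribution: the hypothesis $\eta > (k+2)^2/\bigl(72(k-1)^2\bigr)$ is precisely what is needed so that, after completing the square against the negative coefficient $-(k+2)/(6(k-1))$ of $j^2/i$ already present in $\hat{X}_{i,j}$, the quartic term creates a nonnegative contribution that absorbs every remaining term in this regime.

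The main obstacle is obtaining a uniform error bound across the crossover $j\asymp i^{2/3}$, where neither the Taylor expansion nor the asymptotic expansion of $\Ai$ is individually clean. Following~\cite{ElveyPriceEtal2021Compacted} and~\cite{Changetal2023dcombining}, the symbolic verification that the leading Laurent coefficients in $i^{-1/3}$ vanish, together with the construction of an explicit polynomial majorant of the truncation remainder in each regime, is most cleanly delegated to computer algebra: the human input is the choice of corrections in $\hat{X}_{i,j}$, the flipped sign in $\hat{s}_i$ and the $\eta$-threshold, while the accompanying Maple worksheet mechanically collects powers of $i^{-1/3}$ and $j/i^{1/3}$ and checks that the resulting expression is nonnegative in the stated range.
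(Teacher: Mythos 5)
Your overall strategy---expand the residual in powers of $i^{-1/3}$, use $\Ai''(x)=x\Ai(x)$ to reduce the shifted Airy values to $\Ai$ and $\Ai'$ at a common argument, check that the calibrated orders cancel, and delegate the sign of the remainder to computer algebra---is the same one the paper relies on (the paper gives no self-contained proof, deferring to the generalized Newton polygon method of the binary case and an accompanying Maple worksheet). However, your inner-regime argument has a concrete gap. You divide the residual by the common factor $\Ai\bigl(a_1+\tfrac{B(j+1)}{i^{1/3}}\bigr)$, with $B=(2/(k-1))^{1/3}$, and assert that this factor is ``bounded away from zero'' for $0\le j<i^{2/3-\varepsilon}$. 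That is false exactly where the lemma is most delicate: for $j=\LandauO(1)$ the argument tends to $a_1$, a zero of $\Ai$, so the factor is only of order $(j+1)\,i^{-1/3}$ and the quotient $q=\Ai'/\Ai$ at the central argument grows like $i^{1/3}/(j+1)$. Hence your claim that the normalised truncation remainder is $\LandauO(i^{-4/3})$ \emph{uniformly} in $j$, and is therefore beaten by the $k\,i^{-7/6}$ slack, does not follow near $j=0$. This non-uniformity is precisely why the cited proofs keep the residual in the form $c_1(i,j)\Ai(z)+c_2(i,j)\Ai'(z)$ and run the Newton polygon analysis on the two coefficient polynomials separately (and why, as the paper remarks, $\hat X_{i,j}$ carries extra correction terms that are irrelevant for the final asymptotics but needed to place points of the convex hull correctly); normalising by $\Ai$ discards that control.

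A smaller reservation: your explanation of the threshold $\eta>\frac{(k+2)^2}{72(k-1)^2}$ as ``completing the square'' against the coefficient $-\frac{k+2}{6(k-1)}$ of $j^2/i$ is a plausible heuristic (the threshold equals one half of that coefficient squared), but completing the square in the prefactor alone would only require $\eta>\frac{(k+2)^2}{144(k-1)^2}$; the actual constant must come out of the dominant balance in the residual, so this step is asserted rather than established and would need verification in the worksheet. (You are right to read $\eta\,m^4/n^2$ as the quartic term $\eta\,j^4/i^2$ in the $(i,j)$ variables; the $m,n$ notation is a slip carried over from the binary case.)
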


Note that in the binary case $k=2$, many terms in the previous Lemmas~\ref{lem:RelaxedXLower} and \ref{lem:RelaxedXUpper} are zero. 
These terms do not affect the final asymptotics, but they are needed for the proof using generalized Newton polygons.
In particular, they allow to set certain points on the convex hull to zero, which then leads to the same behavior as in the binary case. 
For this reason, the technical proofs follow nearly verbatim the binary ones and we omit them in this extended abstract and refer to~\cite{ElveyPriceEtal2021Compacted} and the accompanying Maple worksheet.

These explicit recurrences that satisfy the recurrence~\eqref{eq:Dyckrecurelaxed} with $=$ replaced by $\leq$ and $\geq$, are the key ingredient to prove our main theorem.

\subsection{Proof of Theorem~\ref{theo:relaxedasymptotics} on relaxed \texorpdfstring{$k$}{k}-ary trees}
\label{sec:proofmainresult}

\newcommand{\ddh}{\tilde{\dd}}

We start with the lower bound.     
First, we define a sequence $X_{i,j} : =\max\{\tilde{X}_{i,j},0\}$ which satisfies the inequality of Lemma~\ref{lem:RelaxedXLower} for \emph{all} $j \leq \frac{i}{k-1}$.
Note that the factor of the Airy function $\Ai$ becomes negative for large $j$.
Then, we define an explicit sequence $\tilde{h}_i := \tilde{s}_i \tilde{h}_{i-1}$ for $i >0$ and $\tilde{h}_0 = \tilde{s}_0$.
Using this we can prove by induction that $\dd_{i,j} \geq C_0 \tilde{h}_{i} X_{i,j}$ for some constant $C_0>0$ and all $i \geq \tilde{i}_0$ and all $0 \leq j \leq \frac{i}{k-1}$. 
Therefore,
\begin{align}
   \dd_{kn,0}
   &\geq C_0 \tilde{h}_{kn} X_{kn,0} \notag\\
   &\geq C_0 \prod_{i=1}^{kn} k \left(1+\frac{a_1}{\bb i^{2/3}} + \frac{7k-6}{6i} - \frac{1}{i^{7/6}} \right){\Ai}\left(a_{1}+\frac{\bb}{(kn)^{1/3}}\right) \notag \\
   &\geq C_1 k^{kn} e^{3 a_1 (kn)^{1/3}/\bb} n^{\frac{7k-8}{6}}. \label{eq:dknlowerbound}
\end{align}
Finally, combining this with~\eqref{eq:rkndijrelaxed} we get the lower bound.

We continue with the upper bound, whose proof is similar, yet more technical.
The starting point is Proposition~\ref{lem:RelaxedXUpper} and, as in the lower bound, a function $X_{i,j}$ that is valid for \emph{all} $0 \leq j \leq \frac{i}{k-1}$.
For this purpose we define as in the binary case~\cite{ElveyPriceEtal2021Compacted} a sequence $\ddh_{i,j}$ depending on some large integer parameter $I>0$ such that   
\begin{align}
    \label{eq:defddh}
    \ddh_{i,j} := 
        \begin{cases}
            \dd_{i,j} & \text{ for } 0 \leq j \leq i^{3/4} \text{ and } i>I, \\
            0 & \text{ otherwise.}            
        \end{cases}
\end{align}
The missing key step is now to show that $\dd_{kn,0} =\LandauO(\ddh_{kn,0})$.
Combining this with the analogous computations performed for the lower bound above we get
\begin{align*}
   \ddh_{kn,0}
   &\leq \hat{C}_1 k^{kn} e^{3 a_1 (kn)^{1/3}/\bb} n^{\frac{7k-8}{6}}.
\end{align*}

To complete the prove we show $\dd_{kn,0} \leq 2 \ddh_{kn,0}$ using lattice path theory and computer algebra.
We start from Equation~\eqref{eq:Dyckrecurelaxed} of $\dd_{i,j}$, which we interpret as a recurrence counting lattice paths.
They are composed of steps $(1,1)$ weighted by $U(i,j)$ and $(1,-k+1)$ weighted by $1$ when the respective step ends at $(i,j)$.
The total weight of a path is the product of its weights. 
Now, we are interested in the paths never crossing $y=0$ and ending at $(kn,0)$. 
Let now $p_{\ell,k,kn}$ be the number of such paths starting at $(r,s)$ and ending at $(kn,0)$.
From~\eqref{eq:Dyckrecurelaxed} we directly get
\begin{align*}
   p_{r,s,kn} = 
    \frac{(k-1)^2(r-s+k)}{(k-1)(r+1)+s+1} p_{r+1,s+1,kn}
    + 
    p_{r+1,s-k+1,kn},
\end{align*}
with $p_{\ell,-1,kn}=0$ and $p_{kn,s,kn} = \delta_{s,0}$, where the factor is $U(r+1,s+1)$.

Now, we are able to show that
\begin{align}
    \label{eq:plj2ninequ}
   \frac{p_{r,s_1,kn}}{s_1+1} \geq \frac{p_{r,s_2,kn}}{s_2+1},
\end{align}
for integers $0 \leq s_1 < s_2  \leq r \leq kn$ such that $k \mid s_2-s_1$.
The proof is completely elementary and uses reverse induction on $r$ starting from $r=kn$. We refer to our accompanying Maple worksheet.

Finally, from~\eqref{eq:plj2ninequ} we directly get 
\begin{align}
    \label{eq:p-2x2y-2x0}
   p_{kx,ky,kn} \leq (ky+1) p_{kx,0,kn}.
\end{align}
This allows to prove the following generalization of~\cite[Lemma~4.6]{ElveyPriceEtal2021Compacted} whose proof follows exactly the same lines and we omit it here.
For the following statement, recall that $\dd_{i,j}$ is the weighted number of paths ending at $(i,j)$. 
Let $\ddh_{i,j}$ be the number of these paths such that no intermediate point $(kx,ky)$ on the path satisfies $x>I_{\varepsilon}$ and $y>x^{3/4}$.

\begin{lemma}\label{lem:chooseconst} 
    For all $\varepsilon>0$ there exists a constant $I_{\varepsilon}>0$ that acts as the parameter $I$ in the definition of $\ddh_{i,j}$ in~\eqref{eq:defddh}, such that $\dd_{kn,0}\leq(1+\varepsilon)\ddh_{kn,0}$ for all $n>0$.
\end{lemma}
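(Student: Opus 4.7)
The plan is to bound the total weight of \emph{bad} paths --- those from the origin to $(kn,0)$ that pass through at least one intermediate point $(kx,ky)$ with $x > I_\varepsilon$ and $y > x^{3/4}$ --- by an arbitrarily small fraction of $\dd_{kn,0}$. Decomposing each bad path at any visited bad point (with over-counting permitted, which yields an upper bound) gives
\begin{align*}
\dd_{kn,0} - \ddh_{kn,0} \;\leq\; \sum_{x > I_\varepsilon}\sum_{y > x^{3/4}} \dd_{kx,ky}\, p_{kx,ky,kn}.
\end{align*}
Applying~\eqref{eq:p-2x2y-2x0} to the second factor and using the trivial bound $\dd_{kx,0}\, p_{kx,0,kn} \leq \dd_{kn,0}$ (paths through $(kx,0)$ form a subset of all paths to $(kn,0)$), the right-hand side is bounded by $\dd_{kn,0}\sum_{x > I_\varepsilon} R(x)$, where
\begin{align*}
R(x):= \sum_{y > x^{3/4}}(ky+1)\,\frac{\dd_{kx,ky}}{\dd_{kx,0}}.
\end{align*}

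The crux is to show that $R(x)$ decays super-exponentially in $x$. I plug in the upper bound $\dd_{kx,ky} \leq C_u\,\hat{h}_{kx}\hat{X}_{kx,ky}$ from Lemma~\ref{lem:RelaxedXUpper} and the lower bound $\dd_{kx,0} \geq C_l\,\tilde{h}_{kx}\tilde{X}_{kx,0}$ from Lemma~\ref{lem:RelaxedXLower}; the uniform boundedness of $\hat{h}_{kx}/\tilde{h}_{kx}$ (since $\hat{s}_i/\tilde{s}_i = 1 + \LandauO(i^{-7/6})$ yields a convergent infinite product) reduces matters to the Airy ratio $\Ai(a_1 + \bb(ky+1)/(kx)^{1/3})/\Ai(a_1 + \bb/(kx)^{1/3})$ multiplied by explicit polynomials in $y/x^{1/3}$. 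For $y > x^{3/4}$ the Airy argument in the numerator exceeds a constant times $x^{5/12}$, so the classical estimate $\Ai(t) = \LandauO(t^{-1/4}e^{-\frac{2}{3}t^{3/2}})$ as $t\to+\infty$ yields $R(x) = \LandauO(e^{-c\,x^{5/8}})$ for some $c>0$. The tail $\sum_{x>I_\varepsilon}R(x)$ therefore tends to $0$ as $I_\varepsilon\to\infty$; choosing $I_\varepsilon$ so that this tail is at most $\varepsilon/(1+\varepsilon)$ delivers $\dd_{kn,0}\leq(1+\varepsilon)\ddh_{kn,0}$, as required.

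The main technical obstacle is uniformity of the ratio bound all the way up to the maximum altitude $y \leq kx/(k-1)$: Lemma~\ref{lem:RelaxedXUpper} is only established for $j < i^{1-\varepsilon'}$, so the extreme regime $y \in [(kx)^{1-\varepsilon'}/k,\; kx/(k-1)]$ must be treated separately. Here a crude estimate exploiting the strongly negative drift in high altitudes suffices: a path reaching altitude $\Theta(x)$ must use order $x$ consecutive up-steps whose cumulative weighted contribution is smaller than $\dd_{kx,0}$ by an exponential factor, and this contribution is absorbed by the super-exponential tail produced by the Airy decay. With these estimates in place, the remaining bookkeeping follows the blueprint of the binary case~\cite[Lemma~4.6]{ElveyPriceEtal2021Compacted} verbatim.
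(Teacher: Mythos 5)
Your overall skeleton coincides with the paper's: decompose the bad paths at a violating point, apply \eqref{eq:p-2x2y-2x0} together with the trivial bound $\dd_{kx,0}\,p_{kx,0,kn}\le \dd_{kn,0}$, and reduce everything to showing that $R(x)=\sum_{y>x^{3/4}}(ky+1)\,\dd_{kx,ky}/\dd_{kx,0}$ has a rapidly decaying, summable tail. The gap is in how you bound the numerator $\dd_{kx,ky}$. You invoke ``$\dd_{kx,ky}\le C_u\,\hat h_{kx}\hat X_{kx,ky}$ from Lemma~\ref{lem:RelaxedXUpper}'', but that lemma only supplies a one-step inequality for the candidate sequence $\hat X_{i,j}$, valid for $j<i^{1-\varepsilon'}$; converting it into an upper bound on the actual sequence $\dd_{i,j}$ would require an induction in which the recurrence \eqref{eq:Dyckrecurelaxed} at altitude $j$ calls on $\dd_{i-1,j+k-1}$, i.e., on altitudes where no control is available. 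This is precisely the obstruction that the truncated sequence $\ddh$ is designed to circumvent: the Airy-type upper bound is only ever established for $\ddh$, and the comparability of $\dd$ with $\ddh$ is the content of the very lemma you are proving. So your central estimate is circular (or at best rests on an unproved claim), and the difficulty is not confined to the extreme regime $y\ge(kx)^{1-\varepsilon'}$ that you propose to treat separately with a drift argument --- it already affects the whole range $x^{3/4}<y<(kx)^{1-\varepsilon'}$.

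The paper avoids this by using only ingredients already available at this stage: the lower bound \eqref{eq:dknlowerbound} for the denominator $\dd_{kx,0}$, and for the numerator the crude combinatorial bound $\dd_{kx,ky}\le(k-1)^{(k-1)x+y}\binom{kx}{x-y}$, obtained by dropping the non-negativity constraint on the paths and using $U(i,j)\le k-1$. A short computation with the ratio $\binom{kx}{x-y}/\binom{kx}{x}$ shows that the factor $(k-1)^{y}$ cancels and leaves a Gaussian-type decay $e^{-cy^2/x}$, which for $y>x^{3/4}$ gives $R(x)=\LandauO(e^{-c'x^{1/2}})$; this dominates the $e^{\LandauO(x^{1/3})}$ contributed by the stretched exponential in the denominator, so the tail sum over $x>I_\varepsilon$ can be made smaller than $\varepsilon/(1-\varepsilon)$. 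If you replace your Airy-ratio estimate by this crude bound, the rest of your argument (which is otherwise the paper's) goes through.
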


\begin{proof}
    As in the binary case, we first rewrite the claimed inequality into 
    $1-\frac{\ddh_{kn,0}}{\dd_{kn,0}} \leq \frac{\varepsilon}{1-\varepsilon}$.
    The left-hand side here represents the proportion of walks that pass through at least one point $(kx,ky)$ such that $x > I_{\varepsilon}$ and $y>x^{3/4}$. 
    Let $s_{x,y,n}$ be the proportion of such walks that pass through one fixed such point:
    \[
        s_{x,y,n} = \frac{\dd_{kx,ky} p_{kx,ky,kn}}{\dd_{kn,0}}.
    \]
    The idea is that the sum of these values over all violating points $x,y$ is of course an upper bound for our claim. 
    So we want to prove that this sum is very small for large $x$.
    
    By~\eqref{eq:p-2x2y-2x0} combined with the fact that $s_{x,y,n}\leq1$, we get
    \begin{align*}
        s_{x,y,n} 
        &\leq \frac{(ky+1)p_{kx,0,kn}d_{kx,ky}}{\dd_{kn,0} s_{x,0,n}} = (ky+1)\frac{\dd_{kx,ky}}{\dd_{kx,0}}\\
        &\leq \frac{ky+1}{C_1 k^{kx} e^{3 a_1 (kx)^{1/3}/\bb} x^{\frac{7k-8}{6}}} (k-1)^{(k-1)x+y}\binom{kx}{x-y}
    \end{align*}
    where we also used the lower bound~\eqref{eq:dknlowerbound} for $d_{kx,0}$ and the crude bound $\dd_{kx,ky} \leq (k-1)^{(k-1)x+y}\binom{kx}{x-y}$. 
    The latter holds, as we may bound non-negative paths ending at $(kx,ky)$ by unconstrained paths with weights $(k-1)$ for the up step $(1,1)$ and $1$ for down step $(1,-k+1)$, since $U(i,j) \leq k-1$ for all $i,j \geq 0$. 
    
    Now this last expression is completely explicit, and for large $x$ one can see that it is of order $\Theta(e^{-x^{1/2}})$.
    Now, the proof of the binary case follows verbatim and the claim follows.
\end{proof}

Finally, this proves $\dd_{kn,0} \leq 2 \ddh_{kn,0}$ and ends the proof of  our main Theorem~\ref{theo:relaxedasymptotics}.

\section{Conclusion and Outlook}

The aim of this paper was to show that the method from~\cite{ElveyPriceEtal2021Compacted} developed for the asymptotics of compacted binary trees can be applied to more general recurrences including a parameter. 
Previously, in~\cite{Changetal2023dcombining} we showed how to handle more general weights including a parameter, while in this paper we generalized the used steps. 
Instead of steps $(1,1)$ and $(1,-1)$ of Dyck type we studied a recurrence with larger steps given by $(1,k)$ and $(1,-1)$.
In particular, we proved in Theorem~\ref{theo:relaxedasymptotics} that in the class of relaxed $k$-ary trees the phenomenon of a stretched exponential appears for any integer $k \geq 2$. 

In the long version, we will also prove the generalizations from the binary to the $k$-ary case for compacted trees and minimal deterministic finite automata accepting a finite language on an alphabet of size $k$ for which we showed in Propositions~\ref{prop:compacted} and \ref{prop:minimalDFAs} that their respective recurrences have similar shapes. 
We expect that the phenomenon of a stretched exponential persists also in these cases. 
In general, the presented method can also be applied to more general cases including several parameters in the weights and also recurrences with more than two steps. 

Another research direction is to generalize the previously studied relaxed binary trees of bounded right height~\cite{GenitriniGittenbergerKauersWallner2016} to the $k$-ary case.
In the binary case, this class has been proved to be D-finite using exponential generating function methods. 
In the $k$-ary case, a similar rescaling, as we used in Section~\ref{sec:rectrafo} by $n!^{k-1}$ should allow to analyze this class using generalized exponential functions of the type $\sum_{n \geq 0} a_n \frac{z^n}{(n!)^{k-1}}$.
This class is also interesting, since the binary case has rich combinatorial properties, such as bijection and closed-form enumeration formulas~\cite{Wallner2017Bijection}, which are also worth to investigate in the $k$-ary case.

\bibliographystyle{mybiburl}
\bibliography{bibliography}

\begin{thebibliography}{10}

\bibitem{BanderierWallner2021Walls}
Cyril Banderier and Michael Wallner.
\newblock
  \href{https://www.mat.univie.ac.at/~slc/wpapers/FPSAC2021/47.html}{{Y}oung
  {T}ableaux with {P}eriodic {W}alls: {C}ounting with the {D}ensity {M}ethod}.
\newblock  \textit{S\'{e}m. Lothar. Combin.}, 85B, Article 85B.47:12 pp., 2021.

\bibitem{Bollobas2001Graphs}
B\'{e}la Bollob\'{a}s.
\newblock \href{https://doi.org/10.1017/CBO9780511814068}{ \textit{Random
  graphs}}, volume~73 of  \textit{Cambridge Studies in Advanced Mathematics}.
\newblock Cambridge University Press, Cambridge, second edition, 2001.
\newblock
  \href{http://dx.doi.org/10.1017/CBO9780511814068}{{\textcolor{lightgray}[}doi{\textcolor{lightgray}]}}.

\bibitem{bousquet2015xml}
Mireille Bousquet-M\'{e}lou, Markus Lohrey, Sebastian Maneth, and Eric Noeth.
\newblock \href{https://doi.org/10.1007/s00224-014-9544-x}{X{ML} compression
  via directed acyclic graphs}.
\newblock  \textit{Theory Comput. Syst.}, 57(4):1322--1371, 2015.
\newblock
  \href{http://dx.doi.org/10.1007/s00224-014-9544-x}{{\textcolor{lightgray}[}doi{\textcolor{lightgray}]}}.

\bibitem{Changetal2023dcombining}
Yu-Sheng Chang, Michael Fuchs, Hexuan Liu, Michael Wallner, and Guan-Ru Yu.
\newblock \href{https://arxiv.org/abs/2209.03850}{{Enumerative and
  Distributional Results for d-combining Tree-Child Networks}}.
\newblock  \textit{arXiv preprint}, pages 1--48, 2023.

\bibitem{DowneySethiTarjan1980variations}
Peter~J. Downey, Ravi Sethi, and Robert~Endre Tarjan.
\newblock \href{http://dx.doi.org/10.1145/322217.322228}{Variations on the
  common subexpression problem}.
\newblock  \textit{J. Assoc. Comput. Mach.}, 27(4):758--771, 1980.
\newblock
  \href{http://dx.doi.org/10.1145/322217.322228}{{\textcolor{lightgray}[}doi{\textcolor{lightgray}]}}.

\bibitem{ElveyPriceFangWallner2020DFA}
Andrew Elvey~Price, Wenjie Fang, and Michael Wallner.
\newblock
  \href{https://drops.dagstuhl.de/opus/volltexte/2020/12041}{{Asymptotics of
  Minimal Deterministic Finite Automata Recognizing a Finite Binary Language}}.
\newblock In  \textit{AofA 2020}, volume 159 of  \textit{LIPIcs. Leibniz Int.
  Proc. Inform.}, pages 11:1--11:13, 2020.
\newblock
  \href{http://dx.doi.org/10.4230/LIPIcs.AofA.2020.11}{{\textcolor{lightgray}[}doi{\textcolor{lightgray}]}}.

\bibitem{ElveyPriceEtal2021Compacted}
Andrew Elvey~Price, Wenjie Fang, and Michael Wallner.
\newblock \href{https://arxiv.org/pdf/1908.11181.pdf}{Compacted binary trees
  admit a stretched exponential}.
\newblock  \textit{J. Combin. Theory Ser. A}, 177:105306, 40, 2021.
\newblock
  \href{http://dx.doi.org/10.1016/j.jcta.2020.105306}{{\textcolor{lightgray}[}doi{\textcolor{lightgray}]}}.

\bibitem{flss90}
Philippe Flajolet, Paolo Sipala, and Jean-Marc Steyaert.
\newblock \href{http://dx.doi.org/10.1007/BFb0032034}{Analytic variations on
  the common subexpression problem}.
\newblock In  \textit{Automata, languages and programming}, pages 220--234.
  Springer, 1990.
\newblock
  \href{http://dx.doi.org/10.1007/BFb0032034}{{\textcolor{lightgray}[}doi{\textcolor{lightgray}]}}.

\bibitem{GenitriniGittenbergerKauersWallner2016}
Antoine Genitrini, Bernhard Gittenberger, Manuel Kauers, and Michael Wallner.
\newblock \href{http://arxiv.org/abs/1703.10031}{Asymptotic enumeration of
  compacted binary trees of bounded right height}.
\newblock  \textit{J. Combin. Theory Ser. A}, 172:105177, 2020.
\newblock
  \href{http://dx.doi.org/10.1016/j.jcta.2019.105177}{{\textcolor{lightgray}[}doi{\textcolor{lightgray}]}}.

\bibitem{GuttmannWhittington1978Graphs}
A.~J. Guttmann and S.~G. Whittington.
\newblock \href{http://stacks.iop.org/0305-4470/11/721}{Two-dimensional lattice
  embeddings of connected graphs of cyclomatic index two}.
\newblock  \textit{J. Phys. A}, 11(4):721--729, 1978.

\bibitem{guttmann2015analysis}
Anthony~J. Guttmann.
\newblock \href{https://arxiv.org/abs/1405.5327}{Analysis of series expansions
  for non-algebraic singularities}.
\newblock  \textit{Journal of Physics A: Mathematical and Theoretical},
  48(4):045209, 2015.
\newblock
  \href{http://dx.doi.org/10.1088/1751-8113/48/4/045209}{{\textcolor{lightgray}[}doi{\textcolor{lightgray}]}}.

\bibitem{JansonEtal2000Graphs}
Svante Janson, Tomasz \L~uczak, and Andrzej Rucinski.
\newblock \href{https://doi.org/10.1002/9781118032718}{ \textit{Random
  graphs}}.
\newblock Wiley-Interscience Series in Discrete Mathematics and Optimization.
  Wiley-Interscience, New York, 2000.
\newblock
  \href{http://dx.doi.org/10.1002/9781118032718}{{\textcolor{lightgray}[}doi{\textcolor{lightgray}]}}.

\bibitem{Liskovets2006exact}
Valery~A. Liskovets.
\newblock
  \href{https://linkinghub.elsevier.com/retrieve/pii/S0166218X05002568}{Exact
  enumeration of acyclic deterministic automata}.
\newblock  \textit{Discrete Applied Mathematics}, 154(3):537--551, 2006.
\newblock
  \href{http://dx.doi.org/10.1016/j.dam.2005.06.009}{{\textcolor{lightgray}[}doi{\textcolor{lightgray}]}}.

\bibitem{McDiarmidSempleWelsh2015Networks}
Colin McDiarmid, Charles Semple, and Dominic Welsh.
\newblock \href{https://doi.org/10.1007/s00026-015-0260-2}{Counting
  phylogenetic networks}.
\newblock  \textit{Ann. Comb.}, 19(1):205--224, 2015.
\newblock
  \href{http://dx.doi.org/10.1007/s00026-015-0260-2}{{\textcolor{lightgray}[}doi{\textcolor{lightgray}]}}.

\bibitem{vdHofstad2017Graphs}
Remco van~der Hofstad.
\newblock \href{https://doi.org/10.1017/9781316779422}{ \textit{Random graphs
  and complex networks. {V}ol. 1}}.
\newblock Cambridge Series in Statistical and Probabilistic Mathematics, [43].
  Cambridge University Press, Cambridge, 2017.
\newblock
  \href{http://dx.doi.org/10.1017/9781316779422}{{\textcolor{lightgray}[}doi{\textcolor{lightgray}]}}.

\bibitem{Wallner2017Bijection}
Michael Wallner.
\newblock
  \href{http://www.sciencedirect.com/science/article/pii/S0304397518304699}{A
  bijection of plane increasing trees with relaxed binary trees of right height
  at most one}.
\newblock  \textit{Theoretical Computer Science}, 755:1--12, 2019.
\newblock
  \href{http://dx.doi.org/https://doi.org/10.1016/j.tcs.2018.06.053}{{\textcolor{lightgray}[}doi{\textcolor{lightgray}]}}.

\bibitem{Wallner2023Inversion}
Michael Wallner.
\newblock
  \href{https://2023.permutationpatterns.com/booklet/bookletco.pdf#page=142}{{D}yck
  paths and inversion tables}.
\newblock  \textit{Permutation Patterns 2023}, pages 142--144, 2023.

\end{thebibliography}

\end{document}